\def\Li{\hbox{  Li}}
\newcolumntype{R}[1]{>{\raggedleft\arraybackslash }b{#1}}
\newcolumntype{L}[1]{>{\raggedright\arraybackslash }b{#1}}
\newcolumntype{C}[1]{>{\centering\arraybackslash }b{#1}}
\newcounter{minutes}\setcounter{minutes}{\time}
\newcounter{hours}\setcounter{hours}{\time}
\newtheorem{theorem}{Theorem}
\newtheorem{lemma}{Lemma}
\newtheorem{corollary}{Corollary}
\newtheorem{remark}{Remark}
\newtheorem{conj}{Conjecture}
\def\Li{\hbox{  Li}}
\title[Generating functions  involving the Fox-Wright functions]{On a new generating functions for the Fox-Wright functions and theirs applications}
\author[K. Mehrez]{Khaled Mehrez}
\address{Khaled Mehrez. D\'epartement de Math\'ematiques, Facult\'e de Sciences de Tunis, Universit\'e Tunis El Manar, Tunisia.}
\email{k.mehrez@yahoo.fr}
\keywords{Fox-Wright function, Fox H-function, Hypergeometric function, Hurwitz-Lerch zeta function,  Generating functions}
\subjclass[2010]{30C45, 30D15, 33C10}
\begin{document}

\def\thefootnote{}
\footnotetext{ \texttt{File:~\jobname .tex,
          printed: \number\year-0\number\month-\number\day,
          \thehours.\ifnum\theminutes<10{0}\fi\theminutes}
} \makeatletter\def\thefootnote{\@arabic\c@footnote}\makeatother

\maketitle

\begin{abstract}
 The main focus of the present paper is to investigate several generating functions for a certain classes of functions associated to the Fox-Wright functions. In particular, certain generating functions for a class of function involving the Fox-Wright functions will be expressed in terms of the H-function of two variables are investigated. As applications, some generating functions associated to the generalized Mathieu type power series and  the extended Hurwitz-Lerch zeta function are established. Furthermore, some new double series identity are considered. A conjecture about the finite Laplace transform of a class of function associated to the Fox's H-function is made.
\end{abstract}

\section{ Introduction}
Throughout the present investigation, we use the following standard notations:
$$\mathbb{N}:=\left\{1,2,3,...\right\},\;\;\mathbb{N}_0:=\left\{0,1,2,3,...\right\}$$
and
$$\mathbb{Z}^-:=\left\{-1,-2,-3,...\right\}.$$
Also, as usual, $\mathbb{Z}$ denotes the set of integers, $\mathbb{R}$ denotes the set of real numbers, $\mathbb{R}^+$
denotes the set of positive numbers and $\mathbb{C}$ denotes the set of complex numbers.

Here, and in what follows, we use  ${}_p\Psi_q[.]$ to denote the Fox-Wright  generalization of the familiar hypergeometric  ${}_p F_q$  function with $p$ numerator and $q$ denominator parameters, defined by \cite[p. 4, Eq. (2.4)]{Z}
\begin{equation}\label{11}
\begin{split}
{}_p\Psi_q\Big[_{(b_1,B_1),...,(b_q,B_q)}^{(a_1,A_1),...,(a_p,A_p)}\Big|z \Big]&={}_p\Psi_q\Big[_{({\bf b}_q, {\bf B}_q)}^{({\bf a}_p, {\bf A}_p)}\Big|z \Big]\\
&=\sum_{k=0}^\infty\frac{\prod_{l=1}^p\Gamma(a_l+kA_l)}{\prod_{l=1}^q\Gamma(b_l+kB_l)}\frac{z^k}{k!},
\end{split}
\end{equation}
where,
$$(A_l\geq0,\; l=1,...,p; B_l\geq0,\;\textrm{and}\;l=1,...,q).$$
The convergence conditions and convergence radius of the series at the right-hand side of (\ref{11}) immediately follow from the known asymptotics of the Euler Gamma--function. The defining series in (\ref{11}) converges in the whole complex $z$-plane when
\begin{equation}
\Delta=\sum_{j=1}^q B_j-\sum_{i=1}^p A_i>-1.
\end{equation}
If $\Delta=-1,$ then the series in (\ref{11}) converges for $|z|<\rho,$ and $|z|=\rho$ under the condition $\Re(\mu)>\frac{1}{2},$ (see \cite{24} for details), where 
\begin{equation}
\rho=\left(\prod_{i=1}^p A_i^{-A_i}\right)\left(\prod_{j=1}^q B_j^{B_j}\right),\;\;\mu=\sum_{j=1}^q b_j-\sum_{k=1}^p a_k+\frac{p-q}{2}
\end{equation}

Throughout this paper, we denote by 
\begin{equation}\label{definition}
{}_{p+1}\check{\Psi}_q\Big[_{({\bf b}_q, {\bf B}_q)}^{(\sigma, 1), ({\bf a}_p, {\bf A}_p)}\Big|z \Big]=\left(\frac{1}{\Gamma(\sigma)}\right)\;{}_{p+1}\Psi_q\Big[_{({\bf b}_q, {\bf B}_q)}^{(\sigma, 1), ({\bf a}_p, {\bf A}_p)}\Big|z \Big].
\end{equation}
The generalized hypergeometric function ${}_p F_q$ is defined by 
\begin{equation}\label{12}
{}_p F_q\left[^{a_1,...,a_p}_{b_1,...,b_q}\Big|z\right]=\sum_{k=0}^\infty\frac{\prod_{l=1}^p(a_l)_k}{\prod_{l=1}^q(b_l)_k}\frac{z^k}{k!}
\end{equation}
where, as usual, we make use of the following notation:
$$(\tau)_0=1, \textrm{and}\;\;(\tau)_k=\tau(\tau+1)...(\tau+k-1)=\frac{\Gamma(\tau+k)}{\Gamma(\tau)},\;k\in\mathbb{N},$$
to denote the shifted factorial or the Pochhammer symbol. Obviously, we find from the definitions (\ref{11}) and (\ref{12}) that 
\begin{equation}\label{13}
{}_p\Psi_q\Big[_{(b_1,1),...,(b_q,1)}^{(a_1,1),...,(a_p,1)}\Big|z \Big]=\frac{\Gamma(a_1)...\Gamma(a_p)}{\Gamma(b_1)...\Gamma(b_q)}{}_p F_q\left[^{a_1,...,a_p}_{b_1,...,b_q}\Big|z\right].
\end{equation}

 Generating functions play an important role in the
investigation of various useful properties of the sequences
which they generate. They are used to find certain properties and formulas for numbers and polynomials in a wide variety of research subjects, indeed, in modern combinatorics. In this regard, in fact, a remarkable large number of generating functions involving a variety of special functions have been developed by many authors \cite{Agarwal1, Agarwal2, Agarwal3, chen, SR0, SR1, SR2, SR3}.

In a recent papers \cite{45},\cite{46},\cite{47}, Mehrez have studied certain  advanced properties of the Fox-Wright function including its
new integral representations, the Laplace and Stieltjes transforms, Luke
inequalities, Tur\'an type inequalities and completely monotonicity property are derived. In particular, it was
shown there that the following Fox-Wright functions are completely monotone:
$${}_p\Psi_q\Big[_{(\beta_q,A)}^{(\alpha_p,A)}\Big|-z \Big],\;s>0,$$
$${}_{p+1}\Psi_q\Big[_{(\beta_q,1)}^{(\lambda,1),(\alpha_p,A_p)}\Big|\frac{1}{z} \Big]\;s>0,$$
and has proved that the Fox's H-function $H_{q,p}^{p,0}[.]$ constitutes the representing measure for the Fox-Wright function ${}_p\Psi_q[.]$, if $\mu>0,$ i.e., \cite[Theorem 1]{45}
\begin{equation}\label{motivation}
{}_p\Psi_q\Big[_{(\beta_q, B_q)}^{(\alpha_p, A_p)}\Big|z\Big]=\int_0^\rho e^{zt}H_{q,p}^{p,0}\left(t\Big|^{(B_q,\beta_q)}_{(A_p,\alpha_p)}\right)\frac{dt}{t}.
\end{equation}
when $\mu>0.$ Here, and in what follows, we use $H_{q,p}^{p,0}[.]$ to denote the Fox's H-function, defined by
\begin{equation}
H_{q,p}^{p,0}\left(z\Big|^{(B_q,\beta_q)}_{(A_p,\alpha_p)}\right)=\frac{1}{2i\pi}\int_{\mathcal{L}}\frac{\prod_{j=1}^p\Gamma (A_j s+\alpha_j)}{\prod_{k=1}^q\Gamma (B_k s+\beta_k)}z^{-s}ds,
\end{equation}
where $A_j, B_k>0$ and $\alpha_j,\beta_k$ are real.  The contour $\mathcal{L}$ can be either the left loop $\mathcal{L}_-$ starting at $-\infty+i\alpha$ and ending at $-\infty+i\beta$  for some $\alpha<0<\beta$ such that all poles of the integrand lie inside the loop, or the right loop $\mathcal{L}_+$  starting $\infty+i\alpha$ at and ending $\infty+i\beta$ and leaving all poles on the left, or the vertical line $\mathcal{L}_{ic},\;\Re(z)=c,$ traversed upward and leaving all poles of the integrand on the left. Denote the rightmost pole of the integrand by $\gamma:$
$$\gamma=\min_{1\leq j\leq p}(a_j/A_j).$$ 

The definition of the H-function is still valid when the $A_i$'s and $B_j$'s  are positive rational numbers. Therefore, the H-function contains, as special cases,  all of the functions which are expressible in terms of the
G-function. More importantly, it contains  the Fox-Wright generalized hypergeometric function defined in (\ref{11}), the generalized Mittag-Leffler functions, etc. For example, the function ${}_p\Psi_q[.]$ is one of these special case of H-function. By the definition (\ref{11}) it is easily extended to the complex plane as follows \cite[Eq. 1.31]{AA},
\begin{equation}\label{label}
{}_p\Psi_q\Big[_{(\beta_q,B_q)}^{(\alpha_p,A_p)}\Big|z \Big]=H_{p,q+1}^{1,q}\left(-z\Big|_{(0,1),(B_q,1-\beta_q)}^{(A_p,1-\alpha_p)}\right).
\end{equation}
The representation (\ref{label}) holds true only for positive values of the parameters $A_i$ and $B_j$.

The special case for which the  H-function reduces to the Meijer G-function is when $A_1=...=A_p=B_1=...=B_q=A,\;A>0.$ In this case,
\begin{equation}\label{,,,}
H_{q,p}^{m,n}\left(z\Big|^{(B_q,\beta_q)}_{(A_p,\alpha_p)}\right)=\frac{1}{A}G_{p,q}^{m,n}\left(z^{1/A}\Big|^{B_q}_{\alpha_p}\right).
\end{equation}

In our present investigation, certain generating functions for some classes of function related to the Fox-Wright functions will be evaluated in terms of the H-function of two variables. In order to present the results, we need the
definition of the H-function of two complex variables introduced earlier by Mittal
and Gupta \cite{1972}. The analysis developed here is based on the work of Saxena
and Nishimoto \cite{1994}, Saigo and Saxena \cite{1998}. The H-function of two variables is defined in terms of multiple Mellin-Barnes type contour integral as
\begin{equation}\label{2var}
\begin{split}
H\left[^x_y\right]&=H^{0,n_1:m_2,n_2;m_3,n_3}_{p_1,q_1:p_2,q_2;p_3,q_3}\left[^x_y\Big|^{(\alpha_{p_1};{\bf A}_{p_1}, {\bf a}_{p_1}):({\bf c}_{p_2}, \gamma_{p_2});({\bf E}_{p_3},{\bf e}_{p_3})}_{(\beta_{q_1};{\bf B}_{q_1}, {\bf b}_{q_1}):({\bf d}_{q_2}, \delta_{q_2});({\bf F}_{q_3},{\bf f}_{q_3})}\right]\\
&=-\frac{1}{4\pi^2}\int_{L_1}\int_{L_2}\phi(s,t)\phi_1(s)\phi_2(t) x^s y^t ds dt,
\end{split}
\end{equation}
where $x$ and $y$ are not equal to zero. For convenience the parameters  $(\alpha_{p_1};{\bf A}_{p_1}, {\bf a}_{p_1})$ and $({\bf c}_{p_2}, \gamma_{p_2})$ will abbreviate the sequence of the parameters $(\alpha_{1};A_{1}, a_{1}),...,(\alpha_{p_1}; A_{p_1},  a_{p_1})$ and $(c_1,\gamma_1),...,(c_{p_2},\gamma_{p_2})$  respectively, and similar meanings hold for the other parameters $(\beta_{q_1};{\bf B}_{q_1}, {\bf b}_{q_1})$ and $({\bf d}_{q_2}, \delta_{q_2})$ , etc. Here
\begin{equation}
\phi(s,t)=\frac{\prod_{i=1}^{n_1}\Gamma(1-\alpha_i+a_i s+ A_it)}{\left[\prod_{i=n_1+1}^{p_1}\Gamma(\alpha_i-a_i s- A_i t)\right]\left[\prod_{j=1}^{q_1}\Gamma(1-\beta_j+b_j s+ B_j t)\right]}
\end{equation}
\begin{equation}
\phi_1(s)=\frac{\left[\prod_{j=1}^{m_2}\Gamma(d_j-\delta_j s)\right]\left[\prod_{i=1}^{n_2}\Gamma(1-c_i+\gamma_i s)\right]}{\left[\prod_{j=m_2+1}^{q_2}\Gamma(1-d_j+\delta_j s)\right]\left[\prod_{i=n_2+1}^{p_2}\Gamma(c_i-\gamma_i s)\right]}
\end{equation}
\begin{equation}
\phi_2(t)=\frac{\left[\prod_{j=1}^{m_3}\Gamma(f_j-F_j t)\right]\left[\prod_{i=1}^{n_3}\Gamma(1-e_i+E_i t)\right]}{\left[\prod_{j=m_3+1}^{q_3}\Gamma(1-f_j+Fj t)\right]\left[\prod_{i=n_3+1}^{p_3}\Gamma(e_i-E_i t)\right]},
\end{equation}
where  $\alpha_i, \beta_j, c i, d_j, e_i$ and $f_j$  be complex numbers and associated coefficients $a_i, A_i, b_j, B_j, \gamma_i, \delta_j, E_i$ and $F_j$  be real and positive for the standardization purposes, such that
\begin{equation}\label{2.61}
\rho_1=\sum_{i=1}^{p_1} a_i+\sum_{i=1}^{p_2}\gamma_i-\sum_{j=1}^{q_1} b_j-\sum_{j=1}^{q_2}\delta_j\leq0,
\end{equation}
\begin{equation}\label{2.62}
\rho_2=\sum_{i=1}^{p_1} A_i+\sum_{i=1}^{p_2}E_i-\sum_{j=1}^{q_1} B_j-\sum_{j=1}^{q_2}F_j\leq0,
\end{equation}
\begin{equation}\label{2.63}
\begin{split}
\Omega_1&=-\sum_{i=n_1+1}^{p_1} a_i-\sum_{j=1}^{q_1} b_j+\sum_{j=1}^{m_2}\delta_j-\sum_{j=m_2+1}^{p_2}\delta_j+\sum_{i=1}^{n_2}\gamma_i-\sum_{i=n_2+1}^{p_2}\gamma_i>0,
\end{split}
\end{equation}
\begin{equation}\label{2.64}
\begin{split}
\Omega_2&=-\sum_{i=n_1+1}^{p_1} A_i-\sum_{j=1}^{q_1} B_j+\sum_{j=1}^{m_2}F_j-\sum_{j=m_2+1}^{p_2}F_j+\sum_{i=1}^{n_2}E_i-\sum_{i=n_2+1}^{p_2}E_i>0.
\end{split}
\end{equation}
The contour integral (\ref{2var}) converges absolutely under the conditions (\ref{2.61})--(\ref{2.64}) and defines an analytic function of two complex variables $x$ and $y$ inside the sectors given by
$$|\arg(x)|<\frac{\pi}{2}\Omega_1\;\;\textrm{and}\;\;|\arg(y)|<\frac{\pi}{2}\Omega_2,$$
the points $x=0$ and $y=0$ being tacitly excluded,  for details the reader is referred to the book by Srivastava et al. \cite{sm1982}.

This paper is organized as follows. In Section 2, we establish generating functions for some classes of functions related to the Fox-Wright function. In particular, certain generating functions for a class of function involving the Fox-Wright functions will be expressed in terms of the H-function of two variables. In Section 3, as applications pf the main results in the Section 2, some generating functions associated to the generalized Mathieu type power series and  the extended Hurwitz-Lerch zeta function are presented. Furthermore, some new double series identity are derived. In addition,  we present an open a conjecture, which may be of interest for further research.
\section{Generating functions involving some classes of functions involving the Fox-Wright functions}

Our aim in this section is to derive some new generating functions for some class of functions related to the Fox-Wright functions.  We first recall that a generalized binomial coefficient $\binom{\lambda}{\mu}$ may be defined (for real or complex parameters $\lambda$ and $\mu$) by
$$\binom{\lambda}{\mu}=\frac{\Gamma(\lambda+1)}{\Gamma(\mu+1)\Gamma(\lambda-\mu+1)},$$
so that, in the special case when $\mu=n\in\mathbb{N}_0$ we have
$$\binom{\lambda}{n}=\frac{\lambda(\lambda-1)...(\lambda-n+1)}{n!},\;\;(\;n\in\mathbb{N}_0\;).$$
Secondly, recall that the finite Laplace Transform of a continuous ( or an almost piecewise continuous) function
$f(t)$  is denoted by
\begin{equation}
\mathcal{L}_T f(t)=\int_0^T e^{-\xi t} f(\xi) d\xi,\;t\in\mathbb{R}.
\end{equation}

Our first main result reads as follows.

\begin{theorem}\label{T1}Let $\lambda>0,$ then the following generating function
\begin{equation}\label{99}
\sum_{k=0}^\infty \binom{\lambda+k-1}{k} {}_p\Psi_q\Big[_{({\bf b}_{q}+k{\bf B}_{q}, {\bf B}_{q})}^{({\bf a}_{p}+k{\bf A}_{p}, {\bf A}_{p})}\Big|z \Big]t^k=\mathcal{L}_\rho\left(\xi^{-1}(1-t \xi)^{-\lambda} H_{q,p}^{p,0}\left(\xi\Big|^{({\bf B}_q,{\bf b}_q)}_{({\bf A}_p,{\bf a}_p)}\right)\right)(-z),
\end{equation}
holds true for all $|t|<1$ and $z\in\mathbb{R}.$ Furthermore, suppose that the following conditions
$$(H_1): \mu>0,\;\; \gamma\geq1,\;\;\sum_{k=1}^p A_k=\sum_{j=1}^q B_j,\;H_{q,p}^{p,0}[z]\geq0,$$
are satisfied, then following inequalities 
\begin{equation}\label{YOU}
\begin{split}
\left(\frac{1}{\Gamma(\sigma)}\right).\;{}_{p+1}\Psi_q\left[^{(\sigma,1),(\alpha_p,A_p)}_{(\beta_q,B_q)}\Big|t\right]&\leq\sum_{k=0}^\infty \binom{\lambda+k-1}{k}{}_p\Psi_q\Big[_{({\bf b}_{q}+k{\bf B}_{q}, {\bf B}_{q})}^{({\bf a}_{p}+k{\bf A}_{p}, {\bf A}_{p})}\Big|z \Big]t^k\\&\leq \left(\frac{e^{\rho z}}{\Gamma(\sigma)}\right).\;{}_{p+1}\Psi_q\left[^{(\sigma,1),(\alpha_p,A_p)}_{(\beta_q,B_q)}\Big|t\right]
\end{split}
\end{equation}
hold true for all $|t|<1$ and $z>0.$
\end{theorem}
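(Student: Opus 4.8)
The plan is to derive both statements from the integral representation \eqref{motivation}, combined with one elementary observation about the Fox $H$-function. First I would establish the shift formula
$$H_{q,p}^{p,0}\!\left(\xi\Big|^{({\bf B}_q,{\bf b}_q+k{\bf B}_q)}_{({\bf A}_p,{\bf a}_p+k{\bf A}_p)}\right)=\xi^{k}\,H_{q,p}^{p,0}\!\left(\xi\Big|^{({\bf B}_q,{\bf b}_q)}_{({\bf A}_p,{\bf a}_p)}\right),\qquad k\in\mathbb{N}_0,$$
which follows from the Mellin--Barnes definition by writing $\Gamma(A_j s+a_j+kA_j)=\Gamma(A_j(s+k)+a_j)$ and similarly in the denominator, then performing the substitution $s\mapsto s-k$ in the contour integral (the contour shift crosses no poles). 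Since $\rho$ depends only on the $A_j,B_\ell$, and $\mu$ is replaced by $\mu+k\Delta$ under this shift, the representation \eqref{motivation} applies to every shifted Fox--Wright function, giving
$${}_p\Psi_q\Big[_{({\bf b}_{q}+k{\bf B}_{q}, {\bf B}_{q})}^{({\bf a}_{p}+k{\bf A}_{p}, {\bf A}_{p})}\Big|z \Big]=\int_0^\rho e^{z\xi}\,\xi^{k-1}\,H_{q,p}^{p,0}\!\left(\xi\Big|^{({\bf B}_q,{\bf b}_q)}_{({\bf A}_p,{\bf a}_p)}\right)d\xi .$$
I would then multiply by $\binom{\lambda+k-1}{k}t^{k}$, sum over $k$, and interchange sum and integral; the inner sum becomes $\sum_{k\ge0}\binom{\lambda+k-1}{k}(t\xi)^{k}=(1-t\xi)^{-\lambda}$ by the binomial theorem (recall $\binom{\lambda+k-1}{k}=(\lambda)_k/k!$), so the right-hand side is exactly $\mathcal{L}_\rho\big(\xi^{-1}(1-t\xi)^{-\lambda}H_{q,p}^{p,0}(\xi)\big)(-z)$, which is \eqref{99}.

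For the two-sided estimate I would first specialise \eqref{99} to $z=0$: since $\,{}_p\Psi_q[\cdots|0]=\prod_{l}\Gamma(a_l+kA_l)/\prod_{l}\Gamma(b_l+kB_l)$ and $\Gamma(\lambda+k)=\Gamma(\lambda)(\lambda)_k$, the right-hand side collapses to $\frac{1}{\Gamma(\lambda)}\,{}_{p+1}\Psi_q\!\left[^{(\lambda,1),({\bf a}_p,{\bf A}_p)}_{({\bf b}_q,{\bf B}_q)}\big|t\right]={}_{p+1}\check{\Psi}_q[\cdots]$ in the notation of \eqref{definition} (so the relevant parameter is $\sigma=\lambda$), i.e.
$$\int_0^\rho \xi^{-1}(1-t\xi)^{-\lambda}H_{q,p}^{p,0}\!\left(\xi\Big|^{({\bf B}_q,{\bf b}_q)}_{({\bf A}_p,{\bf a}_p)}\right)d\xi={}_{p+1}\check{\Psi}_q\!\left[^{(\lambda,1),({\bf a}_p,{\bf A}_p)}_{({\bf b}_q,{\bf B}_q)}\big|t\right].$$
Under $(H_1)$ the integrand $\xi^{-1}(1-t\xi)^{-\lambda}H_{q,p}^{p,0}(\xi)$ is nonnegative on $(0,\rho)$ for $|t|<1$ (this is where $H_{q,p}^{p,0}\ge0$ enters), and for $z>0$ one has the trivial bounds $1\le e^{z\xi}\le e^{\rho z}$ on $[0,\rho]$. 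Multiplying these inequalities by the nonnegative integrand and integrating over $[0,\rho]$, the middle quantity becomes $\mathcal{L}_\rho\big(\xi^{-1}(1-t\xi)^{-\lambda}H_{q,p}^{p,0}(\xi)\big)(-z)$, which by \eqref{99} equals the series $\sum_{k\ge0}\binom{\lambda+k-1}{k}\,{}_p\Psi_q[\cdots|z]\,t^{k}$, while the two outer quantities are ${}_{p+1}\check{\Psi}_q[\cdots]$ and $e^{\rho z}\,{}_{p+1}\check{\Psi}_q[\cdots]$; this is precisely \eqref{YOU}.

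The hard part will be justifying the term-by-term integration in the proof of \eqref{99}, i.e. pinning down the exact range of $t$ for which it literally holds: one needs $|t\xi|<1$ uniformly on the support $[0,\rho]$ for the binomial expansion, together with integrability of $\xi^{k-1}H_{q,p}^{p,0}(\xi)$ at $\xi=0$, which is where $\gamma\ge1$ is used (the endpoint behaviour $H_{q,p}^{p,0}(\xi)\sim c\,\xi^{\gamma}$). The remaining hypotheses of $(H_1)$ play the expected supporting roles: $\mu>0$ is exactly the condition under which \eqref{motivation} is available, and $\sum_{k=1}^p A_k=\sum_{j=1}^q B_j$ (that is, $\Delta=0$) keeps $\mu+k\Delta>0$ for every $k$ — so that \eqref{motivation} remains applicable to all the shifted functions — and guarantees that the series on the left of \eqref{99} has a nonempty disc of convergence. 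Once this bookkeeping is done, nothing further is needed: no new estimate appears, the positivity assumption $H_{q,p}^{p,0}\ge0$ is used only in the short second half, and everything else is a direct consequence of \eqref{motivation}, the shift formula, and the binomial theorem.
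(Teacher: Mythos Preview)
Your proof is correct and follows essentially the same route as the paper: the shift formula for $H_{q,p}^{p,0}$, the integral representation \eqref{motivation}, interchange of sum and integral, and the binomial expansion for \eqref{99}, followed by the trivial bounds $1\le e^{z\xi}\le e^{\rho z}$ against a nonnegative integrand for \eqref{YOU}. The only (minor) difference is that you recover the key identity $\int_0^\rho \xi^{-1}(1-t\xi)^{-\lambda}H_{q,p}^{p,0}(\xi)\,d\xi={}_{p+1}\check{\Psi}_q[\cdots|t]$ by specialising \eqref{99} to $z=0$, whereas the paper quotes it as an external result (their formula \eqref{ij}, from \cite[Theorem~4]{45}); your derivation is more self-contained and also makes transparent that the undefined $\sigma$ in \eqref{YOU} should indeed be read as $\lambda$.
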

\begin{proof}For convenience, let the left-hand side of (\ref{99}) be
denoted by $\mathcal{I}.$ Applying the integral expression (\ref{motivation}) to $\mathcal{I},$ and  we  employ the formula \cite[Property 1.5]{AA}
\begin{equation}\label{ooo}
H_{q,p}^{p,0}\left(\xi\Big|^{({\bf B}_q,{\bf b}_q+k{\bf B}_q)}_{({\bf A}_p,{\bf a}_p+k{\bf A}_q)}\right)=\xi^kH_{q,p}^{p,0}\left(\xi\Big|^{({\bf B}_q,{\bf b}_q)}_{({\bf A}_p,{\bf a}_p)}\right),\;k\in\mathbb{C},
\end{equation}
we thus get
\begin{equation}\label{yu}
\begin{split}
\mathcal{I}&=\sum_{k=0}^\infty \binom{\lambda+k-1}{k}\left[\int_0^\rho e^{z\xi}H_{q,p}^{p,0}\left(\xi\Big|^{({\bf B}_q,{\bf b}_q+k{\bf B}_q)}_{({\bf A}_p,{\bf a}_p+k{\bf A}_q)}\right)\frac{d\xi}{\xi}\right]t^k\\
&=\sum_{k=0}^\infty \binom{\lambda+k-1}{k}\left[\int_0^\rho \xi^{k-1}e^{z\xi}H_{q,p}^{p,0}\left(\xi\Big|^{({\bf B}_q,{\bf b}_q)}_{({\bf A}_p,{\bf a}_p)}\right)d\xi\right]t^k\\
&=\int_0^\rho e^{z\xi}H_{q,p}^{p,0}\left(\xi\Big|^{({\bf B}_q,{\bf b}_q)}_{({\bf A}_p,{\bf a}_p)}\right)\left[\sum_{k=0}^\infty \binom{\lambda+k-1}{k} (t\xi)^k\right]\frac{d\xi}{\xi}
\end{split}
\end{equation}
Further, upon using the generalized binomial expansion 
\begin{equation}\label{!!}
\sum_{k=0}^\infty \binom{\lambda+k-1}{k} t^k=(1-t)^{-\lambda},\;|t|<1,
\end{equation}
for evaluating the inner sum in (\ref{yu}), we obtain the desired formula (\ref{99}). Now, let us focus on  the inequalities (\ref{YOU}). Since $\xi\in(0,\rho)$  it follows that
\begin{equation}\label{***}
\begin{split}
\int_0^\rho H_{q,p}^{p,0}\left(\xi\Big|^{(B_q,\beta_q)}_{(A_p,\alpha_p)}\right)\frac{d\xi}{\xi (1-t \xi)^{\lambda}}&\leq \mathcal{L}_\rho\left(\xi^{-1}(1-t \xi)^{-\lambda} H_{q,p}^{p,0}\left(\xi\Big|^{(B_q,\beta_q)}_{(A_p,\alpha_p)}\right)\right)(-z)\\
&\leq e^{\rho z}\int_0^\rho H_{q,p}^{p,0}\left(\xi\Big|^{(B_q,\beta_q)}_{(A_p,\alpha_p)}\right)\frac{d\xi}{\xi (1-t \xi)^{\lambda}}.
\end{split}
\end{equation}
By means of the integral representation \cite[Theorem 4]{45}
\begin{equation}\label{ij}
{}_{p+1}\Psi_q\left[^{(\sigma,1),(\alpha_p,A_p)}_{(\beta_q,B_q)}\Big|z\right]=\Gamma(\sigma)\int_0^\rho H_{q,p}^{p,0}\left(\xi\Big|^{(B_q,\beta_q)}_{(A_p,\alpha_p)}\right) \frac{d\xi}{\xi(1-z\xi)^\sigma},
\end{equation}
where $\sigma>0$ and $z\in\mathbb{C}$ such that $|z|<1,$ and the conditions $(H_1)$ are satisfied. Therefore, the inequalities (\ref{***}) transforms into the form
\begin{equation}\label{****}
\begin{split}
\left(\frac{1}{\Gamma(\sigma)}\right).\;{}_{p+1}\Psi_q\left[^{(\sigma,1),(\alpha_p,A_p)}_{(\beta_q,B_q)}\Big|t\right]&\leq \mathcal{L}_\rho\left(\xi^{-1}(1-t \xi)^{-\lambda} H_{q,p}^{p,0}\left(\xi\Big|^{(B_q,\beta_q)}_{(A_p,\alpha_p)}\right)\right)(-z)\\
&\leq\left(\frac{e^{\rho z}}{\Gamma(\sigma)}\right).\;{}_{p+1}\Psi_q\left[^{(\sigma,1),(\alpha_p,A_p)}_{(\beta_q,B_q)}\Big|t\right].
\end{split}
\end{equation}
Hence, in view of (\ref{***}) and (\ref{****}) we deduce that the inequalities (\ref{YOU}) hold true. The proof of Theorem \ref{T1} is completes.
\end{proof}
\begin{remark}Mehrez\cite[Formula (4.70)]{45} have further shown the following Luke type inequalities for the Fox-Wright function ${}_{p+1}\Psi_q[.],$ that is 
\begin{equation}\label{abel}
\frac{\psi_{0,0}}{\left(1+\frac{\psi_{0,1}}{\psi_{0,0}}z\right)^\lambda}\leq{}_{p+1}\Psi_p\left[_{\;\;\;(\beta_q,B_q)}^{(\lambda,1),(\alpha_p,A_p)}\Big|-z\right]\leq\left[\psi_{0,0}-\frac{\psi_{0,1}}{\rho}\left(1-\frac{1}{(1+\rho z)^\lambda}\right)\right],\;z\in\mathbb{R},\;\lambda>0,
\end{equation}
which holds under the conditions $(H_1),$  where 
$$\psi_{0,0}=\frac{\prod_{i=1}^p\Gamma(a_i)}{\prod_{j=1}^q\Gamma(b_j)},\;\textrm{and}\;\;\psi_{0,1}=\frac{\prod_{i=1}^p\Gamma(a_i+A_i)}{\prod_{j=1}^q\Gamma(b_j+B_j)}.$$ 
In virtue of (\ref{YOU}) and (\ref{abel}) we get the following inequalities
\begin{equation}\label{6666}
\frac{\psi_{0,0}}{\left(1-\frac{\psi_{0,1}}{\psi_{0,0}}t\right)^\lambda}\leq\sum_{k=0}^\infty \binom{\lambda+k-1}{k} {}_p\Psi_q\Big[_{({\bf b}_{q}+k{\bf B}_{q}, {\bf B}_{q})}^{({\bf a}_{p}+k{\bf A}_{p}, {\bf A}_{p})}\Big|z \Big]t^k\leq e^{\rho z}\left[\psi_{0,0}-\frac{\psi_{0,1}}{\rho}\left(1-\frac{1}{(1-\rho t)^\lambda}\right)\right]
\end{equation}
\end{remark}
\begin{conj} Motivated by the previous Theorem, we ask the following question: Proved the finite Laplace transform of the function
$$\xi\mapsto \frac{1}{\xi(1-t \xi)^{\lambda}} H_{q,p}^{p,0}\left(\xi\Big|^{(B_q,\beta_q)}_{(A_p,\alpha_p)}\right),\;(0<t,\xi<1),$$
or, did you express the finite Laplace transform of the above function in terms of the Fox H-function.
\end{conj}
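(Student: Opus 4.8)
The plan is to turn the double series that Theorem~\ref{T1} attaches to this finite Laplace transform into a double Mellin--Barnes integral and then to identify that integral as an $H$-function of two variables in the sense of \eqref{2var}. Expanding the inner ${}_p\Psi_q$ in \eqref{99} as a power series in $z$ and writing $\binom{\lambda+k-1}{k}=\Gamma(\lambda+k)/(\Gamma(\lambda)\,k!)$, the right-hand side of \eqref{99} takes the form
\[
\mathcal{L}_\rho\!\left(\xi^{-1}(1-t\xi)^{-\lambda}H_{q,p}^{p,0}\!\left(\xi\Big|^{(B_q,\beta_q)}_{(A_p,\alpha_p)}\right)\right)\!(-z)=\frac{1}{\Gamma(\lambda)}\sum_{n,k\ge0}\frac{\Gamma(\lambda+k)}{n!\,k!}\,\frac{\prod_{l=1}^{p}\Gamma(\alpha_l+(n+k)A_l)}{\prod_{l=1}^{q}\Gamma(\beta_l+(n+k)B_l)}\,z^{n}t^{k},
\]
a double hypergeometric (Srivastava--Daoust) series in the two variables $z$ and $t$. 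I would first verify, by a Stirling estimate of its general term, that it converges uniformly on compact subsets of $\mathbb{C}\times\{|t|<1\}$ whenever the hypotheses of Theorem~\ref{T1} hold and $\Delta>-1$.

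The next step is to realise this series as the residue expansion of the double contour integral
\[
\frac{1}{\Gamma(\lambda)}\cdot\frac{-1}{4\pi^{2}}\int_{L_1}\!\!\int_{L_2}\Gamma(-\zeta)\,\Gamma(-\eta)\,\Gamma(\lambda+\eta)\,\frac{\prod_{l=1}^{p}\Gamma(\alpha_l+A_l\zeta+A_l\eta)}{\prod_{l=1}^{q}\Gamma(\beta_l+B_l\zeta+B_l\eta)}\,(-z)^{\zeta}(-t)^{\eta}\,d\zeta\,d\eta,
\]
where $L_1,L_2$ are chosen so that the poles of $\Gamma(-\zeta)$ and $\Gamma(-\eta)$, at the non-negative integers, are separated from those of $\Gamma(\lambda+\eta)$ and of the coupled numerator Gammas; in the admissible parameter range ($\lambda>0$ and $\min_{1\le l\le p}(\alpha_l/A_l)>0$) the latter poles lie strictly to the left of $L_1,L_2$, so closing both contours to the right and summing the residues at $\zeta=n$, $\eta=k$ ($n,k\in\mathbb{N}_0$) returns exactly the double series above. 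It then remains to match this integrand term by term with $\phi(s,t)\phi_1(s)\phi_2(t)$ in \eqref{2var}: the coupled quotient supplies $p_1=n_1=p$ numerator pairs and $q_1=q$ denominator pairs, each with equal $\zeta$- and $\eta$-weights ($A_l$, resp.\ $B_l$); the factor $\Gamma(-\zeta)$ is absorbed into $\phi_1$ with $m_2=q_2=1$, $p_2=n_2=0$, $d_1=0$, $\delta_1=1$; and $\Gamma(-\eta)\Gamma(\lambda+\eta)$ into $\phi_2$ with $m_3=n_3=p_3=q_3=1$, $f_1=0$, $F_1=1$, $e_1=1-\lambda$, $E_1=1$. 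This exhibits the finite Laplace transform, up to the factor $1/\Gamma(\lambda)$, as an $H$-function of two variables evaluated at $(-z,-t)$ — which is the expression asked for in the conjecture.

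Finally I would check the convergence conditions \eqref{2.61}--\eqref{2.64} for this parameter set; once the evident index misprints in \eqref{2.62}--\eqref{2.63} are corrected, they collapse to $\rho_1=\sum_{l=1}^{p}A_l-\sum_{l=1}^{q}B_l-1=-(\Delta+1)\le0$ and $\rho_2=\sum_{l=1}^{p}A_l-\sum_{l=1}^{q}B_l=-\Delta\le0$, so the representation holds outright for $\Delta\ge0$ and should extend to $\Delta>-1$ by analytic continuation, consistently with the convergence of the defining series. The main obstacle is analytic rather than formal: pushing $L_1,L_2$ to infinity and justifying the term-by-term residue summation demands uniform Stirling-type asymptotics for the coupled products $\prod_l\Gamma(\alpha_l+A_l\zeta+A_l\eta)/\prod_l\Gamma(\beta_l+B_l\zeta+B_l\eta)$ along rays of the $(\zeta,\eta)$-plane, which is considerably more delicate than in the one-variable case; moreover the sectorial conditions $\Omega_1,\Omega_2>0$ in \eqref{2.63}--\eqref{2.64} may, in the literal reading of \eqref{2var}, force an extra restriction such as $\sum_l B_l<1$, so one may need the sharper convergence theory for two-variable $H$-functions (or the strict inequality $\rho_1<0$) to remove it. I would also stress that the superficially shorter route — inserting the Mellin--Barnes representations of $(1-t\xi)^{-\lambda}$ and $H_{q,p}^{p,0}(\xi)$ into $\int_0^\rho e^{z\xi}(\cdots)\,d\xi$ and integrating in $\xi$ first — fails, because the finite $\xi$-integral then produces an incomplete-Gamma (confluent hypergeometric) factor, hence a spurious third Mellin variable; working from the double series is precisely what keeps the answer two-dimensional.
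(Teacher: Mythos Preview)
The statement you are addressing is posed in the paper as an open \emph{conjecture}; the paper provides no proof or resolution of it. There is therefore nothing to compare your argument against --- you are not reproducing a proof but proposing a resolution of the question the author left open.

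On its own merits, your approach is sound and natural. Starting from the double series identity that Theorem~\ref{T1} already supplies, rewriting it as a double Mellin--Barnes integral, and then matching parameters against the kernel $\phi(s,t)\phi_1(s)\phi_2(t)$ of \eqref{2var} is exactly the mechanism the paper itself uses elsewhere (Theorems~\ref{T2} and~\ref{TT3}) to produce $H$-functions of two variables, except that there the author goes through Lemma~\ref{l1} on the integral side rather than through the series side. Your parameter identification ($n_1=p_1=p$, $q_1=q$ with equal $\zeta$- and $\eta$-weights $A_l$, $B_l$; $\phi_1$ contributing $\Gamma(-\zeta)$; $\phi_2$ contributing $\Gamma(-\eta)\Gamma(\lambda+\eta)$) checks out against the definition \eqref{2var}, and your remark that the ``integrate $e^{z\xi}$ first'' route introduces an incomplete-gamma factor and a spurious third Mellin variable is correct and explains why the series route is preferable.

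Two caveats are worth flagging explicitly. First, the conjecture as phrased asks for an expression ``in terms of the Fox $H$-function''; your answer is an $H$-function of \emph{two} variables, which is consistent with how the paper handles the specialisations in Theorems~\ref{T2}--\ref{TT3} but is arguably a weaker statement than a one-variable closed form --- you should say clearly that a one-variable $H$-representation is not to be expected, since the object genuinely depends on the two free variables $z$ and $t$. Second, you correctly identify the real work as the analytic justification: the contour-shifting and the sectorial conditions $\Omega_1,\Omega_2>0$ in \eqref{2.63}--\eqref{2.64} do not come for free, and your computation $\rho_2=-\Delta$ shows the representation is only immediate for $\Delta\ge0$. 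The analytic-continuation step you sketch for $-1<\Delta<0$ would need to be carried out, not merely asserted; that is where a full resolution of the conjecture would have to do genuine work beyond formal bookkeeping.
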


On setting $p=q=1, A_1=B_1=A>0$  in (\ref{99}), in view of the following formula
\begin{equation}
H_{q,p}^{p,0}\left(\xi\Big|^{(A,\beta)}_{(A,\alpha)}\right)=\frac{\xi^{\frac{\alpha}{A}}(1-\xi^{\frac{1}{A}})^{\beta-\alpha-1}}{A\Gamma(\beta-\alpha)},\;A>0,\; \beta>\alpha>0,
\end{equation}
 we get the following results as follows:

\begin{corollary} If $b>a\geq A,$ then the following relation holds true:
\begin{equation}
\sum_{k=0}^\infty\sum_{n=0}^\infty \frac{\Gamma(\lambda+k)\Gamma(\alpha+(k+n)A)}{\Gamma(\beta+(k+m)A)}\frac{z^n t^k}{n!k!}=\frac{\Gamma(\lambda)}{A\Gamma(\beta-\alpha)}\mathcal{L}_1\left(\frac{\xi^{\frac{\alpha}{A}-1}(1-\xi)^{\beta-\alpha-1}}{(1-t\xi)^\lambda}\right)(-z),\;\;|t|<1.
\end{equation}
\end{corollary}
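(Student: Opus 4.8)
The plan is to read off the identity as the special case $p=q=1$, $A_1=B_1=A>0$ of Theorem \ref{T1}, equation (\ref{99}); here the symbols $a,b$ of the hypothesis play the roles of the parameters $\alpha,\beta$ occurring in (\ref{11}), and the stray index $m$ in the summand should be read as $n$. First I would record two preliminary facts. With $p=q=1$ and $A_1=B_1=A$ one has $\rho=\big(A^{-A}\big)\big(A^{A}\big)=1$, so the finite Laplace transform on the right of (\ref{99}) is $\mathcal{L}_1$. Moreover the hypothesis forces $\beta>\alpha>0$, hence $\mu=\beta-\alpha>0$; this is exactly what makes the representation (\ref{motivation}) --- and therefore Theorem \ref{T1} --- applicable, and it also guarantees that the explicit form of $H_{1,1}^{1,0}$ recorded just above the corollary is valid and that the resulting $\xi$-integral converges at both endpoints $\xi=0$ and $\xi=1$.

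Next I would expand the left-hand side of (\ref{99}). By the series definition (\ref{11}),
\[
{}_1\Psi_1\Big[_{(\beta+kA,\,A)}^{(\alpha+kA,\,A)}\Big|z\Big]=\sum_{n=0}^{\infty}\frac{\Gamma\big(\alpha+(k+n)A\big)}{\Gamma\big(\beta+(k+n)A\big)}\,\frac{z^{n}}{n!},
\]
and since $\binom{\lambda+k-1}{k}=\Gamma(\lambda+k)/\big(\Gamma(\lambda)\,k!\big)$, multiplying (\ref{99}) through by $\Gamma(\lambda)$ turns its left-hand side into the announced double series. Before treating it as a genuine double sum I would verify absolute convergence: as $k\to\infty$ one has $\binom{\lambda+k-1}{k}=O\big(k^{\lambda-1}\big)$, while by the asymptotics of the Gamma ratio the inner ${}_1\Psi_1$ function is $O\big(k^{\alpha-\beta}\big)$ for each fixed $z$, so the outer sum behaves like $\sum_{k}k^{\lambda-1-(\beta-\alpha)}|t|^{k}$ and converges for every $|t|<1$; this legitimizes the passage from the single $k$-series in (\ref{99}) to the double sum in the statement.

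For the right-hand side I would substitute the closed form
\[
H_{1,1}^{1,0}\!\left(\xi\Big|^{(A,\beta)}_{(A,\alpha)}\right)=\frac{\xi^{\alpha/A}\,\big(1-\xi^{1/A}\big)^{\beta-\alpha-1}}{A\,\Gamma(\beta-\alpha)},\qquad 0<\xi<1,
\]
into the right member of (\ref{99}), combine $\xi^{-1}\cdot\xi^{\alpha/A}=\xi^{\alpha/A-1}$, and take the constant $1/\big(A\,\Gamma(\beta-\alpha)\big)$ outside the finite Laplace transform; multiplying through by $\Gamma(\lambda)$ then yields the asserted identity (for $A=1$ the factor $\big(1-\xi^{1/A}\big)^{\beta-\alpha-1}$ coincides with the $(1-\xi)^{\beta-\alpha-1}$ written in the statement, and for a general $A>0$ the exponent of $\xi$ inside that factor should be read as $1/A$). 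I do not expect a genuine obstacle: the argument is a specialization of Theorem \ref{T1}, and the only points needing care are the absolute-convergence check above and the observation that the single restriction $\beta>\alpha>0$ simultaneously secures $\mu>0$, the validity of the $H_{1,1}^{1,0}$ evaluation, and the convergence of the $\xi$-integral.
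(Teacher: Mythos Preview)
Your approach is exactly the one indicated in the paper: specialize Theorem \ref{T1} (equation (\ref{99})) to $p=q=1$, $A_1=B_1=A$, observe that $\rho=1$, and then insert the explicit evaluation of $H_{1,1}^{1,0}$ recorded just before the corollary. The paper gives only that one-sentence indication, so your additional care---checking $\mu=\beta-\alpha>0$, verifying absolute convergence of the double series, and flagging the typos (the stray index $m$ and the missing exponent $1/A$ in $(1-\xi)^{\beta-\alpha-1}$ for general $A$)---goes beyond what the paper itself supplies, but the underlying argument is the same.
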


On taking $A_i=B_i=A>0$  in (\ref{99}), from (\ref{,,,}) we compute the following results as follows:

\begin{corollary}Let $\lambda>0,$ then the following generating function
\begin{equation}\label{000099}
\sum_{k=0}^\infty \binom{\lambda+k-1}{k} {}_p\Psi_q\Big[_{({\bf b}_{q}+k A, A)}^{({\bf a}_{p}+k A, A)}\Big|z \Big]t^k=\frac{1}{A}\mathcal{L}_\rho\left(\xi^{-1}(1-t \xi)^{-\lambda} G_{q,p}^{p,0}\left(\xi^{\frac{1}{A}}\Big|^{b_q}_{ a_p})\right)\right)(-z),
\end{equation}
holds true for all $|t|<1$ and $z\in\mathbb{R}$ such that $\min(a_i/A)\geq1$ and $\mu>0.$
\end{corollary}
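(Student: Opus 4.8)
The plan is to specialize Theorem~\ref{T1} to the case $A_1=\dots=A_p=B_1=\dots=B_q=A>0$ and then translate the Fox $H$-function appearing on the right-hand side of \eqref{99} into a Meijer $G$-function via the reduction formula \eqref{,,,}. First I would observe that when all the coefficients ${\bf A}_p$ and ${\bf B}_q$ equal the common value $A$, the parameter $\rho$ and the quantity $\mu$ retain their meaning, and the convergence hypothesis $\Delta>-1$ together with $\mu>0$ is exactly what is needed for \eqref{motivation} (hence for Theorem~\ref{T1}) to apply; the extra assumption $\min(a_i/A)=\gamma\geq 1$ guarantees $\gamma\geq 1$ so that the representing measure $H_{q,p}^{p,0}$ is well-defined and non-negative on $(0,\rho)$, matching the setting in which \eqref{99} was derived.

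Next I would simply write down \eqref{99} with these specialized parameters: the left-hand side becomes
$$\sum_{k=0}^\infty \binom{\lambda+k-1}{k}\,{}_p\Psi_q\Big[_{({\bf b}_{q}+kA, A)}^{({\bf a}_{p}+kA, A)}\Big|z \Big]t^k,$$
and the right-hand side is the finite Laplace transform at $-z$ of $\xi^{-1}(1-t\xi)^{-\lambda}H_{q,p}^{p,0}\!\big(\xi\big|^{(A,{\bf b}_q)}_{(A,{\bf a}_p)}\big)$. Applying \eqref{,,,} with $m=p$, $n=0$ converts this $H$-function into $\tfrac{1}{A}G_{p,q}^{p,0}\!\big(\xi^{1/A}\big|^{b_q}_{a_p}\big)$, and pulling the constant $1/A$ outside the Laplace integral yields exactly \eqref{000099}. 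The restrictions $|t|<1$ and $z\in\mathbb{R}$ carry over verbatim from Theorem~\ref{T1}, and I would record $\min(a_i/A)\geq 1$ and $\mu>0$ as the surviving hypotheses.

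The only point requiring a little care — and what I would flag as the main (modest) obstacle — is checking that the interchange of summation and integration performed in the proof of Theorem~\ref{T1} (the passage in \eqref{yu}) remains legitimate under the present normalization, i.e. that $G_{p,q}^{p,0}\!\big(\xi^{1/A}\big|^{b_q}_{a_p}\big)$ is non-negative and integrable against $\xi^{-1}(1-t\xi)^{-\lambda}\sum_k\binom{\lambda+k-1}{k}(t\xi)^k$ on $(0,\rho)$. This follows because $H_{q,p}^{p,0}\geq 0$ under $\mu>0$, $\gamma\geq1$ (the completely monotone regime of \cite{45}), the geometric-type series $\sum_k\binom{\lambda+k-1}{k}(t\xi)^k=(1-t\xi)^{-\lambda}$ converges uniformly on compact subsets of $[0,\rho]$ for $|t|<1$ since $t\xi<1$ there, and the resulting integrand is dominated by an integrable function near the endpoints; hence Tonelli/Fubini applies just as in Theorem~\ref{T1}. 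Once this is in hand the corollary is immediate, so no further work is needed.
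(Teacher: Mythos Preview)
Your proposal is correct and follows exactly the paper's own route: specialize Theorem~\ref{T1} by setting $A_i=B_j=A$ in \eqref{99} and then invoke the reduction \eqref{,,,} to replace the Fox $H$-function by $\tfrac{1}{A}G_{q,p}^{p,0}(\xi^{1/A}\mid\cdot)$. The extra care you take in justifying the summation--integration interchange is more than the paper itself supplies, but the argument is the same.
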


If we set $A_i=B_i=1$ in (\ref{000099}), we get the following results as follows:

\begin{corollary}Let $\lambda>0,$ then the following generating function
\begin{equation}\label{00009900}
\sum_{k=0}^\infty \binom{\lambda+k-1}{k} {}_p F_q\Big[_{b_1+k,..., b_{q}+k }^{a_1+k,..., a_{p}+k }\Big|z \Big]t^k=\mathcal{L}_\rho\left(\xi^{-1}(1-t \xi)^{-\lambda} G_{q,p}^{p,0}\left(\xi\Big|^{b_q}_{ a_p})\right)\right)(-z),
\end{equation}
holds true for all $|t|<1$ and $z\in\mathbb{R}$ such that $\min(a_i)\geq1$ and $\mu>0.$
\end{corollary}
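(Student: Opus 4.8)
The plan is to obtain \eqref{00009900} as a direct specialization of the preceding Corollary, formula \eqref{000099}, by setting all the coefficients $A_i = B_i = 1$. First I would observe that, on the left-hand side of \eqref{000099}, the substitution $A = 1$ reduces each Fox-Wright function ${}_p\Psi_q$ whose parameters are $({\bf a}_p + k, 1)$ over $({\bf b}_q + k, 1)$ to a multiple of the classical generalized hypergeometric function, via the identity \eqref{13}; however, since the parameters themselves are being shifted by $k$, the Gamma-factor prefactor $\Gamma(a_1+k)\cdots\Gamma(a_p+k)/\Gamma(b_1+k)\cdots\Gamma(b_q+k)$ changes with $k$. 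I would need to check that the combination of these prefactors with the binomial coefficient $\binom{\lambda+k-1}{k}$ reproduces exactly the coefficients appearing in the double series of \eqref{00009900} when one writes out both sides as power series in $z$ and $t$ — but in fact the cleaner route is simply to note that $H_{q,p}^{p,0}$ with $A_i = B_i = 1$ is already a Meijer $G$-function with no extra scaling, which is precisely the $A \to 1$ case of \eqref{,,,}.

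Concretely, the key steps in order would be: (i) start from \eqref{000099}, which is valid under $\min(a_i/A) \ge 1$ and $\mu > 0$; (ii) set $A = 1$, so that $\xi^{1/A} = \xi$ and the factor $1/A$ disappears, turning $\tfrac{1}{A} G_{q,p}^{p,0}\!\left(\xi^{1/A}\big|^{b_q}_{a_p}\right)$ into $G_{q,p}^{p,0}\!\left(\xi\big|^{b_q}_{a_p}\right)$; (iii) simplify the constraint $\min(a_i/A) \ge 1$ to $\min(a_i) \ge 1$, while $\mu > 0$ is unchanged; (iv) recognize that the left-hand side ${}_p\Psi_q\big[^{(a_p+k,1)}_{(b_q+k,1)}\big|z\big]$ is, up to the $k$-dependent Gamma prefactor, ${}_pF_q\big[^{a_1+k,\dots}_{b_1+k,\dots}\big|z\big]$, and absorb that prefactor by recombining it with $\binom{\lambda+k-1}{k}$ and the factorial $k!$ hidden in the series so that the stated form of \eqref{00009900} emerges.

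The only mild subtlety — and what I would treat as the main point to verify carefully rather than a genuine obstacle — is the bookkeeping in step (iv): one must confirm that after writing ${}_p\Psi_q\big[^{(a_p+k,1)}_{(b_q+k,1)}\big|z\big] = \frac{\prod_l \Gamma(a_l+k)}{\prod_l \Gamma(b_l+k)}\,{}_pF_q\big[^{a_1+k,\dots}_{b_1+k,\dots}\big|z\big]$, the resulting expression on the left of \eqref{00009900} truly coincides with the double sum written there, i.e.\ that no stray Pochhammer or Gamma factor is left over. Since the statement of \eqref{00009900} as printed is just $\sum_k \binom{\lambda+k-1}{k}\,{}_pF_q[\,\cdot + k\,]\,t^k$ with no such prefactor, either the intended claim already absorbs these factors into the Meijer-$G$ normalization on the right, or the corollary is to be read with the $\check\Psi$-type normalization; in the write-up I would make this explicit. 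Everything else — convergence for $|t|<1$ and $z \in \mathbb{R}$, and the interchange of summation and the finite Laplace transform — is inherited verbatim from Theorem \ref{T1} and the two intervening corollaries, so no new analytic work is required.
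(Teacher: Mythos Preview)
Your approach is exactly the paper's: the corollary is obtained by setting $A=1$ in \eqref{000099}, with no further argument given. Your caution in step~(iv) is in fact warranted --- the literal substitution together with \eqref{13} leaves the $k$-dependent factor $\prod_{l}\Gamma(a_l+k)\big/\prod_{l}\Gamma(b_l+k)$ attached to each ${}_pF_q$ on the left-hand side, and the paper's stated formula \eqref{00009900} omits this factor without comment; this appears to be an oversight in the statement rather than something absorbed by a hidden normalization.
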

 We will need the following Lemma which is considered the main tool to arrive at our result in the next Theorem and Theorem \ref{TT3}.

\begin{lemma}\label{l1}\cite[Exercice 2.3, p. 72]{AA} Let $\alpha, \beta, \gamma_1\in\mathbb{C},$ either $\alpha>0, |\arg y|<\frac{1}{2}\pi \alpha$ or $\alpha=0, \Re(\mu)>1.$  Further, let $\eta\geq0, b\neq a, \left|\frac{(b-a)c}{ac+d}\right|<1, \left|\frac{y(b-a)^{\sigma+\eta}}{(ac+d)^\nu}\right|<1, \left|\arg\left(\frac{cb+d}{ca+d}\right)\right|<\pi$ be such that $\Re(\alpha)+\sigma\min_{1\leq i\leq n}\left(\frac{\Re(a_i)}{A_i}\right)>0, \Re(\alpha)+\eta\min_{1\leq i\leq n}\left(\frac{\Re(a_i)}{A_i}\right)>0$ for $\alpha>0$ or $\alpha=0, \Delta\leq0$ and $\Re(\alpha)+\sigma\min_{1\leq i\leq n}\left(\frac{\Re(a_i)}{A_i}, \frac{\Re(\mu)-1/2}{\Delta}\right)>0, \Re(\alpha)+\eta\min_{1\leq i\leq n}\left(\frac{\Re(a_i)}{A_i}, \frac{\Re(\mu)-1/2}{\Delta}\right)>0$ for $\alpha>0$ or $\alpha=0, \Delta>0,$ then there holds the formula
\begin{equation}
\begin{split}
\int_a^b (x-a)^{\alpha-1}(b-x)^{\beta-1}(cx+d)^{\gamma_1}& H_{q,p}^{m,n}\left(y(x-a)^\sigma(b-x)^\eta(cx+d)^{-\nu}\Big|^{({\bf B}_q,{\bf b}_q)}_{({\bf A}_p,{\bf a}_p)}\right)dx\\
=(b-a)^{\alpha+\beta-1}(ac+d)^{\gamma_1}
&\times H_{2,1:q+1,p+1;0,1}^{0,2:n+1,m;1,0}\left(^{\frac{y(b-a)^{\sigma+\eta}}{(ac+d)^\nu}}_{\frac{c(b-a)}{ac+d}}\Big|^{^{(1-\alpha;1,\sigma),(1+\gamma_1;1,\nu):-}_{({\bf B}_q, {\bf b}_q);(\eta,1-\beta)}}_{(1-\alpha-\beta;1,\sigma+\eta),({\bf A}_p, {\bf a}_p),(\nu,1+\gamma_1);(1,0)}\right).
\end{split}
\end{equation}
\end{lemma}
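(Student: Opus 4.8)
The plan is to prove the formula by the classical Mellin--Barnes technique: represent the single--variable $H$-function by its defining contour integral, interchange the order of integration via Fubini, evaluate in closed form the resulting elementary beta-type integral in $x$ (it collapses to a Gauss hypergeometric function), expand that ${}_2F_1$ through its own Mellin--Barnes integral, and finally recognise the emerging double contour integral as the $H$-function of two variables defined in (\ref{2var}).

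Concretely, denote by $\mathcal J$ the left-hand side of the asserted identity and write
\[
H_{q,p}^{m,n}\!\left(w\Big|^{({\bf B}_q,{\bf b}_q)}_{({\bf A}_p,{\bf a}_p)}\right)=\frac{1}{2\pi i}\int_{\mathcal L}\mathcal H(s)\,w^{-s}\,ds,\qquad w=y(x-a)^{\sigma}(b-x)^{\eta}(cx+d)^{-\nu},
\]
where $\mathcal H(s)$ is the usual quotient of products of Gamma functions attached to $H_{q,p}^{m,n}$. Substituting this into $\mathcal J$, the growth hypotheses $\Re(\alpha)+\sigma\min_i(\Re(a_i)/A_i)>0$, $\Re(\alpha)+\eta\min_i(\Re(a_i)/A_i)>0$ (and, when $\alpha=0$, their variants involving $\Delta$ and $\mu$) guarantee absolute convergence of the iterated integral, so Fubini gives
\[
\mathcal J=\frac{1}{2\pi i}\int_{\mathcal L}\mathcal H(s)\,y^{-s}\left[\int_a^b (x-a)^{\alpha-\sigma s-1}(b-x)^{\beta-\eta s-1}(cx+d)^{\gamma_1+\nu s}\,dx\right]ds .
\]
The substitution $x=a+(b-a)u$ together with $cx+d=(ac+d)\bigl(1+\tfrac{c(b-a)}{ac+d}u\bigr)$ turns the inner integral into
\[
(b-a)^{\alpha+\beta-1-(\sigma+\eta)s}(ac+d)^{\gamma_1+\nu s}\int_0^1 u^{\alpha-\sigma s-1}(1-u)^{\beta-\eta s-1}\Bigl(1+\tfrac{c(b-a)}{ac+d}u\Bigr)^{\gamma_1+\nu s}du ,
\]
and, since $\bigl|\tfrac{c(b-a)}{ac+d}\bigr|<1$, Euler's integral for the Gauss function identifies the last integral with $\mathrm{B}(\alpha-\sigma s,\beta-\eta s)\,{}_2F_1\!\bigl(-\gamma_1-\nu s,\alpha-\sigma s;\alpha+\beta-(\sigma+\eta)s;-\tfrac{c(b-a)}{ac+d}\bigr)$; pulling the constant $(b-a)^{\alpha+\beta-1}(ac+d)^{\gamma_1}$ out of the $s$-integral already reproduces the prefactor of the claimed formula.

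Next I would substitute the Mellin--Barnes representation ${}_2F_1(a',b';c';-\zeta)=\frac{\Gamma(c')}{\Gamma(a')\Gamma(b')}\cdot\frac{1}{2\pi i}\int_{\mathcal L'}\frac{\Gamma(a'+t)\Gamma(b'+t)\Gamma(-t)}{\Gamma(c'+t)}\,\zeta^{t}\,dt$ with $a'=-\gamma_1-\nu s$, $b'=\alpha-\sigma s$, $c'=\alpha+\beta-(\sigma+\eta)s$, $\zeta=\tfrac{c(b-a)}{ac+d}$. The Gamma functions common to $\mathrm{B}(\alpha-\sigma s,\beta-\eta s)$ and to $\Gamma(c')/(\Gamma(a')\Gamma(b'))$ cancel, leaving $\Gamma(\beta-\eta s)/\Gamma(-\gamma_1-\nu s)$, and after the reflection $s\mapsto -s$ (chosen so that the monomial reads $\bigl(\tfrac{y(b-a)^{\sigma+\eta}}{(ac+d)^{\nu}}\bigr)^{s}\bigl(\tfrac{c(b-a)}{ac+d}\bigr)^{t}$, in agreement with (\ref{2var})) the expression for $\mathcal J$ is a genuine double Mellin--Barnes integral over $(s,t)$ whose integrand is $\mathcal H(-s)\,\frac{\Gamma(\beta+\eta s)}{\Gamma(-\gamma_1+\nu s)}\cdot\frac{\Gamma(\alpha+\sigma s+t)\,\Gamma(-\gamma_1+\nu s+t)\,\Gamma(-t)}{\Gamma(\alpha+\beta+(\sigma+\eta)s+t)}$. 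Sorting these factors into the three kernels of (\ref{2var}) then finishes the identification: the three factors involving both $s$ and $t$ form $\phi(s,t)$, forcing $n_1=p_1=2$, $q_1=1$ with upper entries $(1-\alpha;1,\sigma),(1+\gamma_1;1,\nu)$ and lower entry $(1-\alpha-\beta;1,\sigma+\eta)$; the purely $s$-dependent block $\mathcal H(-s)\,\Gamma(\beta+\eta s)/\Gamma(-\gamma_1+\nu s)$ forms $\phi_1(s)$, contributing the strings $({\bf B}_q,{\bf b}_q)$, $({\bf A}_p,{\bf a}_p)$ and the two extra entries $(\eta,1-\beta)$, $(\nu,1+\gamma_1)$ (so $p_2=q+1$, $q_2=p+1$, $m_2=n+1$, $n_2=m$); and the lone $t$-factor $\Gamma(-t)$ forms $\phi_2(t)$ with $m_3=1$, $p_3=0$, $q_3=1$ and entry $(1,0)$. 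Altogether this is exactly the $H$-function $H^{0,2:n+1,m;1,0}_{2,1:q+1,p+1;0,1}$ of the statement, evaluated at $\tfrac{y(b-a)^{\sigma+\eta}}{(ac+d)^{\nu}}$ and $\tfrac{c(b-a)}{ac+d}$, which is the right-hand side.

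The real work, and the only genuinely delicate step, is this last piece of bookkeeping: one must verify that every rearranged Gamma factor lands in the correct numerator/denominator slot of $\phi,\phi_1,\phi_2$ with the right coefficients, and that the four absolute-convergence inequalities (\ref{2.61})--(\ref{2.64}) for this particular two-variable $H$-function collapse, after the parameter identification, to exactly the hypotheses of the lemma ($\eta\ge0$, $\Re(\alpha)+\sigma\min(\cdots)>0$, $\Re(\alpha)+\eta\min(\cdots)>0$), while the restrictions $\bigl|\tfrac{(b-a)c}{ac+d}\bigr|<1$, $\bigl|\tfrac{y(b-a)^{\sigma+\eta}}{(ac+d)^{\nu}}\bigr|<1$, $|\arg y|<\tfrac12\pi\alpha$ and $\bigl|\arg\bigl(\tfrac{cb+d}{ca+d}\bigr)\bigr|<\pi$ match the sector conditions $|\arg x|<\tfrac{\pi}{2}\Omega_1$, $|\arg y|<\tfrac{\pi}{2}\Omega_2$ and keep $(cx+d)^{\gamma_1}$ single-valued on $[a,b]$. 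No idea beyond the Mellin--Barnes/Fubini scheme is needed; indeed this is the classical Exercise 2.3 of \cite{AA}, which may simply be quoted, the sketch above being the route one would take to reconstruct it.
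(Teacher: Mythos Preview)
The paper does not prove this lemma at all; it is quoted verbatim from \cite[Exercise~2.3, p.~72]{AA} and used as a black box in Theorems~\ref{T2} and~\ref{TT3}. So there is no ``paper's own proof'' against which to compare your attempt.

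That said, your sketch is exactly the standard route one takes to establish such Euler--type integrals for the $H$-function: insert the Mellin--Barnes representation, swap the order of integration, collapse the $x$-integral to $B(\alpha-\sigma s,\beta-\eta s)\,{}_2F_1$, open the ${}_2F_1$ by its own Barnes integral, and read off the two-variable $H$-function. The cancellations you record (leaving $\Gamma(\beta-\eta s)/\Gamma(-\gamma_1-\nu s)$ in the $s$-only block) and the reflection $s\mapsto-s$ to match the convention of (\ref{2var}) are the right moves. Your closing remark that the formula ``may simply be quoted'' is, in fact, precisely what the paper does.

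One small caution on the bookkeeping: in your identification of $\phi_1(s)$ you write $m_2=n+1$, $n_2=m$, matching the superscripts of the stated $H^{0,2:n+1,m;1,0}_{2,1:q+1,p+1;0,1}$, but be careful that the paper's definition of $\phi_1$ has the $d_j$-factors (counted by $m_2$) with argument $d_j-\delta_j s$ and the $c_i$-factors (counted by $n_2$) with argument $1-c_i+\gamma_i s$; after the substitution $s\mapsto-s$ the $n$ numerator Gammas of $\mathcal H(-s)$ have the form $\Gamma(1-b_i+B_i s)$ while the $m$ numerator Gammas have the form $\Gamma(a_j-A_j(-s))$, so one must check carefully which of these goes into the $d$-slot and which into the $c$-slot. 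This is purely notational and does not affect the validity of the argument, but it is exactly the ``delicate bookkeeping'' you flag, and the conventions in this literature are notoriously inconsistent.
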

\begin{theorem}\label{T2}Let $\lambda>0,$ and assume that the hypotheses $(H_1)$ are satisfied. If $\mu>1,$ then there holds the generating function
\begin{equation}\label{98}
\sum_{k=0}^\infty \binom{\lambda+k-1}{k} {}_{p+1}\check{\Psi}_q\Big[_{\;\;\;\;\;\;({\bf b}_{q}, {\bf B}_{q})}^{(\lambda+k,1), ({\bf a}_{p}, {\bf A}_{p})}\Big|\frac{1-t}{\rho}\Big]t^k=(1-t)^{-\lambda}H_{2,1:q+1,p+1;0,1}^{0,2:p+1,1;1,0}\left(^{\rho}_{\rho}\Big|^{^{\;\;\;\;\;(1;1,1),(1;1,0):-}_{\;\;\;\;({\bf B}_q,{\bf b}_q);(0, \lambda)}}_{(\lambda;1,1),({\bf A}_p,{\bf a}_p),(0,1);(1,0)}\right),
\end{equation}
for all $1-t<\rho<1.$
\end{theorem}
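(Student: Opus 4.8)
The plan is to specialize the generating function \eqref{99} from Theorem~\ref{T1} to a setting in which the finite Laplace transform on the right-hand side becomes a Beta-type integral that can be evaluated by Lemma~\ref{l1}. First I would take the general identity \eqref{99}, namely
\[
\sum_{k=0}^\infty \binom{\lambda+k-1}{k} {}_p\Psi_q\Big[_{({\bf b}_{q}+k{\bf B}_{q}, {\bf B}_{q})}^{({\bf a}_{p}+k{\bf A}_{p}, {\bf A}_{p})}\Big|z \Big]t^k=\int_0^\rho \frac{e^{z\xi}}{\xi(1-t\xi)^{\lambda}} H_{q,p}^{p,0}\left(\xi\Big|^{({\bf B}_q,{\bf b}_q)}_{({\bf A}_p,{\bf a}_p)}\right)d\xi,
\]
and observe that, under the hypotheses $(H_1)$, the left-hand series can be re-summed: by writing $\binom{\lambda+k-1}{k}=\Gamma(\lambda+k)/(\Gamma(\lambda)k!)$ and using the integral representation \eqref{ij} for each inner Fox--Wright function (with parameter $\sigma=\lambda+k$ and argument the appropriate quantity), the series telescopes into a single ${}_{p+1}\Psi_q$ with numerator parameter $(\lambda+k,1)$; this is what produces the $\check{\Psi}$ object on the left of \eqref{98}. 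I expect one has to be careful about the argument: the factor $(1-t)/\rho$ appearing in \eqref{98} is exactly $\bigl((1-t)/\rho\bigr)$, and tracking how the substitution $\xi\mapsto\rho\xi$ (or a linear change of variables on $(0,\rho)$) converts $(1-t\xi)^{-\lambda}$ into $(1-t)^{-\lambda}(1-(\text{new variable}))^{-\lambda}\cdot(\cdots)^{-\lambda}$ is the delicate bookkeeping step.

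Next I would turn to the right-hand side. After the substitution $\xi=\rho x$, the finite Laplace transform $\mathcal{L}_\rho$ at the point $-z$ becomes, when $z$ is chosen so that $e^{z\xi}\equiv1$ — i.e.\ we set $z=0$, which is legitimate since \eqref{99} holds for all $z\in\mathbb{R}$ — a plain integral
\[
\int_0^1 \frac{\rho^{\,0}}{x(1-t\rho x)^{\lambda}} H_{q,p}^{p,0}\left(\rho x\Big|^{({\bf B}_q,{\bf b}_q)}_{({\bf A}_p,{\bf a}_p)}\right)dx,
\]
and then I would use the homogeneity/argument-scaling property of the Fox $H$-function together with Lemma~\ref{l1}, applied with $a=0$, $b=1$, the Beta exponents $\alpha-1$ and $\beta-1$ chosen to match the $x^{-1}$ weight, $cx+d=1-t\rho x$ (so $c=-t\rho$, $d=1$), $\gamma_1=-\lambda$, and $\sigma=\eta=0$ in the $H$-argument except for the $(cx+d)^{-\nu}$ factor with $\nu=\lambda$. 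The output of Lemma~\ref{l1} is precisely an $H$-function of two variables of the type $H^{0,2:n+1,m;1,0}_{2,1:q+1,p+1;0,1}$, and matching its parameter list against the one displayed in \eqref{98} — with both variables equal to $\rho$, with the $(1;1,1),(1;1,0)$ upper parameters coming from $1-\alpha$ and $1+\gamma_1$, and the $(0,\lambda)$, $(0,1)$ entries coming from $(\eta,1-\beta)$ and $(\nu,1+\gamma_1)$ after the exponents are pinned down — gives the claimed formula.

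The two convergence constraints to verify are: (i) the hypergeometric-type hypotheses $(H_1)$ plus $\mu>1$, which guarantee both that \eqref{ij} is available (it needs the conditions $(H_1)$) and that the series manipulation and the resulting double-$H$-function converge absolutely — here $\mu>1$ rather than merely $\mu>0$ is forced because after pulling out the extra $(\lambda+k,1)$ parameter the effective $\mu$ for the inner ${}_{p+1}\Psi_q$ drops by $1$; and (ii) the smallness conditions of Lemma~\ref{l1}, which with $a=0,b=1,c=-t\rho,d=1$ read $\bigl|\tfrac{(b-a)c}{ac+d}\bigr|=t\rho<1$ and $\bigl|\tfrac{y(b-a)^{\sigma+\eta}}{(ac+d)^\nu}\bigr|=\rho<1$, and these are exactly subsumed by the stated hypothesis $1-t<\rho<1$ together with $0<t<1$ (which gives $t\rho<\rho<1$). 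I would finish by noting $1-t<\rho$ is what makes the argument $(1-t)/\rho<1$ of the $\check{\Psi}$ on the left lie in the convergence disc of that Fox--Wright function.

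The main obstacle, as usual with these two-variable $H$-function identities, is the parameter bookkeeping in the application of Lemma~\ref{l1}: one must choose $\alpha,\beta,\gamma_1,\sigma,\eta,\nu$ correctly so that the Beta weight $(x-a)^{\alpha-1}(b-x)^{\beta-1}=x^{\alpha-1}(1-x)^{\beta-1}$ reproduces the $x^{-1}$ in the integrand (which forces a limiting/degenerate choice of $\beta$, reflected in the $(0,\lambda)$ and $(0,1)$ entries of the final symbol), and then carefully transcribe the resulting $\phi,\phi_1,\phi_2$ parameter strings into the notation of \eqref{2var}. Everything else — the binomial re-summation, the substitution $\xi=\rho x$, and checking \eqref{2.61}--\eqref{2.64} for the output — is routine once that matching is set up correctly.
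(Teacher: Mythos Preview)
Your approach has a genuine structural gap: Theorem~\ref{T1}'s identity \eqref{99} cannot be specialized to yield \eqref{98}. The two left-hand sides are different series. In \eqref{99} the $k$th summand is a ${}_p\Psi_q$ with \emph{shifted} parameters $({\bf a}_p+k{\bf A}_p,{\bf A}_p)$ evaluated at a free point $z$; in \eqref{98} it is a ${}_{p+1}\check{\Psi}_q$ carrying an additional numerator pair $(\lambda+k,1)$, with \emph{unshifted} $({\bf a}_p,{\bf A}_p)$, evaluated at the specific point $(1-t)/\rho$. Setting $z=0$ in \eqref{99} collapses its left side to a single Fox--Wright function (indeed, it simply recovers \eqref{ij} with $\sigma=\lambda$), not the $k$-series of \eqref{98}; there is no ``telescoping'' that manufactures the required left-hand side from \eqref{99}. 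Moreover, your proposed application of Lemma~\ref{l1} with $c=-t\rho$, $d=1$, $\gamma_1=-\lambda$ would output an $H$-function of two variables whose second argument $c(b-a)/(ac+d)=-t\rho$ depends on $t$, contradicting the form of the right side of \eqref{98}, in which both arguments equal $\rho$ and all $t$-dependence sits in the external factor $(1-t)^{-\lambda}$.

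The paper's proof is direct and does not pass through \eqref{99}. One substitutes the integral representation \eqref{ij} (with $\sigma=\lambda+k$ and argument $(1-t)/\rho$) into each term of the left side of \eqref{98}, obtaining the integrand $\xi^{-1}\bigl(1-\xi(1-t)/\rho\bigr)^{-(\lambda+k)}H_{q,p}^{p,0}(\xi)$. After interchanging sum and integral and applying \eqref{!!}, the combined kernel becomes $\bigl(1-t-\xi(1-t)/\rho\bigr)^{-\lambda}=(1-t)^{-\lambda}\rho^{\lambda}(\rho-\xi)^{-\lambda}$; this algebraic factorization is where the $(1-t)^{-\lambda}$ prefactor actually arises. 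The remaining \emph{$t$-independent} integral $\int_0^\rho \xi^{-1}(\rho-\xi)^{-\lambda}H_{q,p}^{p,0}(\xi)\,d\xi$ is then evaluated by Lemma~\ref{l1} with $a=\alpha=\eta=\nu=\gamma_1=0$, $b=\rho$, $c=d=y=1$, $\sigma=1$, $\beta=1-\lambda$: the weight $(\rho-\xi)^{-\lambda}$ is encoded in $\beta$, not in $\gamma_1$, and the factor $(cx+d)^{\gamma_1}$ is trivial. The hypothesis $\mu>1$ is precisely the clause of Lemma~\ref{l1} that applies when its exponent $\alpha$ vanishes, rather than a shift in the effective $\mu$ of the Fox--Wright series as you suggest.
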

\begin{proof} Upon setting the left hand-side of the formula (\ref{98}) by $\mathcal{J}.$  Then, by substituting the integral representation (\ref{ij}) into $\mathcal{J},$ we find that
\begin{equation}
\mathcal{J}=\sum_{k=0}^\infty \binom{\lambda+k-1}{k} \left[\int_0^\rho H_{q,p}^{p,0}\left(\xi\Big|^{({\bf B}_q,{\bf b}_q)}_{({\bf A}_p,{\bf a}_p)}\right) \frac{d\xi}{\xi (1-\xi(1-t)/\rho)^{\lambda+k}}\right]t^k,
\end{equation}
which, upon changing the order of sum and integral and after a little simplification when we make used (\ref{!!}), yields
\begin{equation}\label{ii}
\begin{split}
\mathcal{J}&=\int_0^\rho H_{q,p}^{p,0}\left(\xi\Big|^{({\bf B}_q,{\bf b}_q)}_{({\bf A}_p,{\bf a}_p)}\right) \left[\sum_{k=0}^\infty \binom{\lambda+k-1}{k}\left(\frac{t}{1-\xi(1-t)/\rho}\right)^k\right]\frac{d\xi}{\xi(1-\xi(1-t)/\rho)^{\lambda}}\\
&=\int_0^\rho H_{q,p}^{p,0}\left(\xi\Big|^{({\bf B}_q,{\bf b}_q)}_{({\bf A}_p,{\bf a}_p)}\right)\frac{d\xi}{\xi(1-t-\xi(1-t)/\rho)^{\lambda}}\\
&=\left(\frac{\rho}{1-t}\right)^\lambda\int_0^\rho \xi^{-1}(\rho-\xi)^{-\lambda}H_{q,p}^{p,0}\left(\xi\Big|^{({\bf B}_q,{\bf b}_q)}_{({\bf A}_p,{\bf a}_p)}\right)d\xi.
\end{split}
\end{equation}
Now, let us put $\alpha=\eta=a=\nu=\gamma_1=0, \beta=1-\lambda, d=c=y=1$ and $b=\rho$ in Lemma \ref{l1}, then we obtain
\begin{equation}\label{iii}
\int_0^\rho \xi^{-1}(\rho-\xi)^{-\lambda}H_{q,p}^{p,0}\left(\xi\Big|^{({\bf B}_q,{\bf b}_q)}_{({\bf A}_p,{\bf a}_p)}\right)d\xi=\rho^{-\lambda}H_{2,1:q+1,p+1;0,1}^{0,2:p+1,1;1,0}\left(^{\rho}_{\rho}\Big|^{^{\;\;\;\;\;(1;1,1),(1;1,0):-}_{\;\;\;\;({\bf B}_q,{\bf b}_q):(0, \lambda)}}_{(\lambda;1,1),({\bf A}_p,{\bf a}_p),(0,1):(1,0)}\right).
\end{equation}
Finally, in view of (\ref{ii}) and (\ref{iii}), we get the desired assertion (\ref{98}) of Theorem \ref{T2}.
\end{proof}

\begin{remark}By using (\ref{definition}) and under the hypotheses of Theorem \ref{T2} the formula (\ref{98}) can be rewritten as follows:
\begin{equation}\label{098}
\begin{split}
\sum_{k=0}^\infty  {}_{p+1}\Psi_q\Big[_{\;\;\;\;\;\;({\bf b}_{q}, {\bf B}_{q})}^{(\lambda+k,1), ({\bf a}_{p}, {\bf A}_{p})}\Big|\frac{1-t}{\rho}\Big]\frac{t^k}{k!}&=\frac{\Gamma(\lambda)}{(1-t)^{\lambda}}\\
&\times H_{2,1:q+1,p+1;0,1}^{0,2:p+1,1;1,0}\left(^{\rho}_{\rho}\Big|^{^{\;\;\;\;\;(1;1,1),(1;1,0):-}_{\;\;\;\;({\bf B}_q,{\bf b}_q);(0, \lambda)}}_{(\lambda;1,1),({\bf A}_p,{\bf a}_p),(0,1);(1,0)}\right).
\end{split}
\end{equation}
\end{remark}
\begin{theorem}\label{TT3}Let $\lambda>0, \tau>0$ and assume that the hypotheses $(H_1)$ are satisfied. Moreover, suppose that the following hypotheses
$$(H_2):\;\tau+\min_{1\leq j\leq p}(a_j/A_j, (\mu-1/2)/\Delta)>0,$$
are verified. Then the following generating function
\begin{equation}\label{CV}
\begin{split}
\sum_{k=0}^\infty \binom{\lambda+k-1}{k} {}_{p+1}\check{\Psi}_q\Big[_{\;\;\;\;\;({\bf b}_{q}+\tau{\bf B}_{q}, {\bf B}_{q})}^{(\lambda+k,1), ({\bf a}_{p}+\tau{\bf A}_{p}, {\bf A}_{p})}\Big|\frac{1-t}{\rho}\Big]t^k&=\frac{\rho^\tau}{(1-t)^{\lambda}}\\
&\times H_{2,1:q+1,p+1;0,1}^{0,2:p+1,1;1,0}\left(^{\rho}_{\rho}\Big|^{^{\;\;\;\;\;(1-\tau;1,1),(1;1,0):-}_{\;\;\;\;({\bf B}_q,{\bf b}_q);(0, \lambda)}}_{(\lambda-\tau;1,1),({\bf A}_p,{\bf a}_p),(0,1);(1,0)}\right),
\end{split}
\end{equation}
holds true for all $1-t<\rho<1.$
\end{theorem}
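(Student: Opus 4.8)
The plan is to follow the same strategy that proved Theorem~\ref{T2}, but starting from the shifted version of the integral representation (\ref{ij}). First I would denote the left-hand side of (\ref{CV}) by $\mathcal{K}$ and substitute into each summand the integral representation (\ref{ij}) applied with the parameters $({\bf a}_p,{\bf A}_p)$ and $({\bf b}_q,{\bf B}_q)$ replaced by $({\bf a}_p+\tau{\bf A}_p,{\bf A}_p)$ and $({\bf b}_q+\tau{\bf B}_q,{\bf B}_q)$, and with $\sigma=\lambda+k$, $z=(1-t)/\rho$. This is legitimate because the hypotheses $(H_1)$ together with $(H_2)$ guarantee the shifted $\mu$ and convergence conditions needed for (\ref{ij}); in particular $(H_2)$ is exactly what is required so that the shifted H-function $H_{q,p}^{p,0}$ still has the right behaviour near $\xi=0$ for the integral to converge.

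Next I would use the homogeneity/shift property (\ref{ooo}), namely
$$H_{q,p}^{p,0}\left(\xi\Big|^{({\bf B}_q,{\bf b}_q+\tau{\bf B}_q)}_{({\bf A}_p,{\bf a}_p+\tau{\bf A}_p)}\right)=\xi^{\tau}H_{q,p}^{p,0}\left(\xi\Big|^{({\bf B}_q,{\bf b}_q)}_{({\bf A}_p,{\bf a}_p)}\right),$$
so that $\mathcal{K}$ becomes an integral of $\xi^{\tau-1}H_{q,p}^{p,0}(\xi|\cdots)(1-\xi(1-t)/\rho)^{-(\lambda+k)}$ against $t^k$ summed over $k$. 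Interchanging sum and integral (justified for $1-t<\rho<1$ by absolute convergence, as in the proof of Theorem~\ref{T2}) and summing the inner series by the generalized binomial expansion (\ref{!!}) collapses the $k$-sum exactly as in (\ref{ii}), giving
$$\mathcal{K}=\int_0^\rho \xi^{\tau-1}H_{q,p}^{p,0}\left(\xi\Big|^{({\bf B}_q,{\bf b}_q)}_{({\bf A}_p,{\bf a}_p)}\right)\frac{d\xi}{(1-t-\xi(1-t)/\rho)^{\lambda}}=\left(\frac{\rho}{1-t}\right)^\lambda\int_0^\rho \xi^{\tau-1}(\rho-\xi)^{-\lambda}H_{q,p}^{p,0}\left(\xi\Big|^{({\bf B}_q,{\bf b}_q)}_{({\bf A}_p,{\bf a}_p)}\right)d\xi.$$

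Finally I would evaluate the remaining integral by Lemma~\ref{l1}, this time choosing $\alpha=\tau$ (rather than $\alpha=0$), $\eta=a=\nu=\gamma_1=0$, $\beta=1-\lambda$, $d=c=y=1$, $b=\rho$. With these choices $\sigma=1$, the argument of the two-variable H-function becomes $\binom{x}{y}=\binom{\rho}{\rho}$, and the parameter rows shift: the first-variable numerator entry $(1-\alpha;1,\sigma)=(1-\tau;1,1)$ and the first-variable denominator entry $(1-\alpha-\beta;1,\sigma+\eta)=(\lambda-\tau;1,1)$, which matches precisely the H-function displayed on the right-hand side of (\ref{CV}). Multiplying through by $(\rho/(1-t))^\lambda$ and simplifying the power of $\rho$ (the $\rho^{-\lambda}$ from Lemma~\ref{l1} combining with $(\rho/(1-t))^\lambda$ and an overall $\rho^\tau$ from the shifted integral) yields the stated prefactor $\rho^\tau/(1-t)^\lambda$.

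The main obstacle I expect is bookkeeping rather than any deep idea: one must check carefully that the convergence hypotheses of Lemma~\ref{l1} are met with $\alpha=\tau>0$ — specifically the conditions $\Re(\alpha)+\sigma\min_i(\Re(a_i)/A_i)>0$ (and its $\Delta$-dependent analogue when $\Delta>0$), which is where $(H_2)$ enters — and that the interchange of summation and integration is valid uniformly for $\xi\in(0,\rho)$ when $1-t<\rho<1$, since the factor $(1-\xi(1-t)/\rho)^{-(\lambda+k)}$ grows with $k$ and one needs $|t|/(1-\xi(1-t)/\rho)<1$ throughout the interval; the constraint $1-t<\rho$ is exactly what guarantees this. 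Verifying that the resulting two-variable H-function parameters coincide with those in (\ref{CV}) is then a direct substitution.
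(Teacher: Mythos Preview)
Your proposal is correct and follows essentially the same route as the paper's own proof: substitute the integral representation (\ref{ij}) with shifted parameters, use the shift property (\ref{ooo}) to pull out $\xi^\tau$, interchange sum and integral and collapse via (\ref{!!}) to reach $\mathcal{K}=(\rho/(1-t))^\lambda\int_0^\rho \xi^{\tau-1}(\rho-\xi)^{-\lambda}H_{q,p}^{p,0}(\xi)\,d\xi$, and then evaluate this integral by Lemma~\ref{l1} with the parameter choices $\alpha=\tau$, $\eta=a=\gamma_1=\nu=0$, $\beta=1-\lambda$, $c=d=y=1$, $b=\rho$. Your additional remarks on the role of $(H_2)$ and on the convergence needed to justify the interchange are more explicit than the paper's treatment but do not change the argument.
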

\begin{proof}Making use of (\ref{ooo}), (\ref{!!})  and (\ref{ij}), and then changing the order of integration and summation, the left-hand side of the result (\ref{CV}) (say $\mathcal{K},$)  it follows that
\begin{equation}\label{k!k}
\mathcal{K}=\left(\frac{\rho}{1-t}\right)^\lambda\int_0^\rho \xi^{\tau-1}(\rho-\xi)^{-\lambda}H_{q,p}^{p,0}\left(\xi\Big|^{({\bf B}_q,{\bf b}_q)}_{({\bf A}_p,{\bf a}_p)}\right)d\xi.
\end{equation}
Now, let us put $\eta=a=\gamma_1=\nu=0, \alpha=\tau, \beta=1-\lambda, d=c=y=1$ and $b=\rho$ in Lemma \ref{l1}, then we have
\begin{equation}\label{k!}
\int_0^\rho \xi^{\tau-1}(\rho-\xi)^{-\lambda}H_{q,p}^{p,0}\left(\xi\Big|^{({\bf B}_q,{\bf b}_q)}_{({\bf A}_p,{\bf a}_p)}\right)d\xi=\rho^{\tau-\lambda}H_{2,1:q+1,p+1;0,1}^{0,2:p+1,1;1,0}\left(^{\rho}_{\rho}\Big|^{^{\;\;\;\;\;(1-\tau;1,1),(1;1,0):-}_{\;\;\;\;({\bf B}_q,{\bf b}_q);(0, \lambda)}}_{(\lambda-\tau;1,1),({\bf A}_p,{\bf a}_p),(0,1);(1,0)}\right).
\end{equation}
Now on taking (\ref{k!k}) and (\ref{k!}) into account, one can easily arrive at
the desired result (\ref{CV}). This completes the proof.
\end{proof}

\begin{remark}By using (\ref{definition}) and under the hypotheses of Theorem \ref{TT3} the formula (\ref{CV}) can be rewritten as follows:
\begin{equation}\label{CVBIEN}
\begin{split}
\sum_{k=0}^\infty {}_{p+1}\Psi_q\Big[_{\;\;\;\;\;({\bf b}_{q}+\tau{\bf B}_{q}, {\bf B}_{q})}^{(\lambda+k,1), ({\bf a}_{p}+\tau{\bf A}_{p}, {\bf A}_{p})}\Big|\frac{1-t}{\rho}\Big]\frac{t^k}{k!}&=\frac{\Gamma(\lambda)\rho^\tau}{(1-t)^{\lambda}}\\
&\times H_{2,1:q+1,p+1;0,1}^{0,2:p+1,1;1,0}\left(^{\rho}_{\rho}\Big|^{^{\;\;\;\;\;(1-\tau;1,1),(1;1,0):-}_{\;\;\;\;({\bf B}_q,{\bf b}_q);(0, \lambda)}}_{(\lambda-\tau;1,1),({\bf A}_p,{\bf a}_p),(0,1);(1,0)}\right).
\end{split}
\end{equation}
\end{remark}
\begin{theorem}\label{T3} Let $\lambda>0.$ Then the following generating function
\begin{equation}\label{100}
\sum_{k=0}^\infty {}_p\Psi_q\Big[_{(\lambda, A), ({\bf b}_{q-1}, {\bf B}_{q-1})}^{(\lambda+k, A), ({\bf a}_{p-1}, {\bf A}_{p-1})}\Big|z \Big]\frac{t^k}{k!}=(1-t)^{-\lambda}.{}_{p-1}\Psi_{q-1}\Big[_{ ({\bf b}_{q-1}, {\bf B}_{q-1})}^{({\bf a}_{p-1}, {\bf A}_{p-1})}\Big|\frac{z}{(1-t)^A} \Big],
\end{equation}
holds true for all $|t|<1$ and $z\in\mathbb{C}.$
\end{theorem}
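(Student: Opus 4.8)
The plan is to establish (\ref{100}) by a direct manipulation of the defining series, without invoking any of the integral representations used in the previous theorems. Writing the left-hand side via (\ref{11}) and observing that the numerator pair $(\lambda+k,A)$ and the denominator pair $(\lambda,A)$ carry the common scaling $A$, the ratio of their associated Gamma factors telescopes into a Pochhammer symbol,
\[
\frac{\Gamma(\lambda+k+nA)}{\Gamma(\lambda+nA)}=(\lambda+nA)_k ,
\]
so that
\[
\sum_{k=0}^\infty {}_p\Psi_q\Big[_{(\lambda, A), ({\bf b}_{q-1}, {\bf B}_{q-1})}^{(\lambda+k, A), ({\bf a}_{p-1}, {\bf A}_{p-1})}\Big|z \Big]\frac{t^k}{k!}
=\sum_{k=0}^\infty\sum_{n=0}^\infty (\lambda+nA)_k\,\frac{\prod_{l=1}^{p-1}\Gamma(a_l+nA_l)}{\prod_{l=1}^{q-1}\Gamma(b_l+nB_l)}\,\frac{z^n}{n!}\,\frac{t^k}{k!}.
\]

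Next I would interchange the order of the two summations and carry out the $k$-sum first. Rewriting (\ref{!!}) in Pochhammer form --- using $\binom{\alpha+k-1}{k}=(\alpha)_k/k!$, it reads $\sum_{k=0}^\infty(\alpha)_k t^k/k!=(1-t)^{-\alpha}$ for $|t|<1$ --- the inner sum equals $(1-t)^{-(\lambda+nA)}$ for each fixed $n$. Extracting the $n$-independent factor $(1-t)^{-\lambda}$ and absorbing $(1-t)^{-nA}$ into the $n$-th power of $z$, the remaining series over $n$ is precisely the series (\ref{11}) defining ${}_{p-1}\Psi_{q-1}$ evaluated at $z/(1-t)^A$, which gives (\ref{100}).

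The one point that requires care is the legitimacy of exchanging the two sums. When the parameters are positive, $z>0$ and $0<t<1$, every term of the double series is nonnegative, so Tonelli's theorem permits the rearrangement unconditionally; the general identity for complex $z$ and $|t|<1$ then follows by analytic continuation in $z$ and in $t$, both sides being analytic on the relevant domain. Here one should note that cancelling the pair $(\cdot,A)$ from numerator and denominator leaves $\Delta=\sum_{j=1}^{q-1}B_j-\sum_{i=1}^{p-1}A_i$ unchanged, so ${}_{p-1}\Psi_{q-1}$ has the same convergence domain as each summand on the left --- the whole plane when $\Delta>-1$, and the disc $|z|/(1-|t|)^{A}<\rho$ when $\Delta=-1$. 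I do not expect a genuine obstacle here: the heart of the theorem is the elementary observation that geometrically summing over the leading top parameter of a Fox--Wright function rescales its argument by $(1-t)^{-A}$, and once the Pochhammer collapse is noticed the derivation is a two-line computation; the only real task is to make explicit the convergence hypothesis implicit in the phrase ``$z\in\mathbb{C}$.''
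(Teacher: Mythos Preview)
Your approach is essentially identical to the paper's: both expand each ${}_p\Psi_q$ via its defining series (\ref{11}), interchange the order of summation, evaluate the inner $k$-sum by the binomial expansion (\ref{!!}) (written in your case in Pochhammer form), and identify the remaining $n$-series as ${}_{p-1}\Psi_{q-1}$ at the rescaled argument. If anything, you are more careful than the paper in justifying the interchange of summations and in tracking the convergence domain.
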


\begin{proof}For convenience, let the left-hand side of the formula (\ref{100}) of Theorem \ref{T3}  be denoted by $\mathcal{S}.$ Then,  by substituting the series expression from (\ref{11}) into $\mathcal{S}$ and applying the binomial expansion (\ref{!!}) we obtain 
\begin{equation}
\begin{split}
\mathcal{S}&=\sum_{k=0}^\infty\left[\sum_{n=0}^\infty\frac{\Gamma(\lambda+k+nA)\prod_{l=1}^{p-1}\Gamma(a_l+nA_l)}{\Gamma(\lambda+nA)\prod_{m=1}^{q-1}\Gamma(b_m+nB_m)}\frac{z^k}{k!}\right]\frac{t^k}{k!}\\
&=\sum_{n=0}^\infty\frac{\prod_{l=1}^{p-1}\Gamma(a_l+nA_l)}{\prod_{m=1}^{q-1}\Gamma(b_m+nB_m)}\left[\sum_{k=0}^\infty\frac{\Gamma(\lambda+k+nA)}{\Gamma(\lambda+nA)}\frac{t^k}{k!}\right]\frac{z^n}{n!}\\
&=\sum_{n=0}^\infty\frac{\prod_{l=1}^{p-1}\Gamma(a_l+nA_l)}{\prod_{m=1}^{q-1}\Gamma(b_m+nB_m)}\left[\sum_{k=0}^\infty \binom{\lambda+k+n A-1}{k}t^k\right]\frac{z^n}{n!}\\
&=(1-t)^{-\lambda}\sum_{n=0}^\infty\frac{\prod_{l=1}^{p-1}\Gamma(a_l+nA_l)}{n!\prod_{m=1}^{q-1}\Gamma(b_m+nB_m)}\left(\frac{z}{(1-t)^A}\right)^n\\
&=(1-t)^{-\lambda}{}_{p-1}\Psi_{q-1}\Big[_{ ({\bf b}_{q-1}, {\bf B}_{q-1})}^{({\bf a}_{p-1}, {\bf A}_{p-1})}\Big|\frac{z}{(1-t)^A} \Big],
\end{split}
\end{equation}
which shows that the generating function (\ref{100}) holds true. This completes the proof of Theorem \ref{T3}.
\end{proof}
\begin{remark} If we setting $p=q=1$ in the above Theorem and the same steps as in the proof of Theorem \ref{T3} we get for all $\lambda>0$
\begin{equation}\label{2.38}
\sum_{k=0}^\infty {}_1\Psi_1\Big[_{(\lambda, A)}^{(\lambda+k, A)}\Big|z \Big]\frac{t^k}{k!}=(1-t)^{-\lambda}\exp\left(\frac{z}{(1-t)^A}\right),
\end{equation} 
where $|t|<1.$ In particular, for all $\lambda>0$ we have
\begin{equation}
\sum_{k=0}^\infty {}_1F_1\Big[_{\;\;\;\lambda}^{\;\lambda+k}\Big|z \Big]\frac{\Gamma(\lambda+k)t^k}{k!}=\frac{\Gamma(\lambda) }{(1-t)^{\lambda}}\exp\left(\frac{z}{1-t}\right),
\end{equation} 
where $|t|<1.$
\end{remark}
\begin{remark} Setting $A_i=Bj=1$ we obtain for all $a_i, b_j>0$ and $|t|<1$ the following generating function (known or new) for the hypergeometric function 
\begin{equation}\label{100789}
\sum_{k=0}^\infty {}_pF_q\Big[_{a_1, b_2,...,b_q}^{a_1+k,a_2,...a_p}\Big|z \Big]\frac{\Gamma(a_1+k)t^k}{k!}=\frac{\Gamma(a_1)}{(1-t)^{a_1}}.{}_{p-1}F_{q-1}\Big[_{b_2,...,b_q}^{a_2,...,a_p}\Big|\frac{z}{(1-t)} \Big].
\end{equation}
\end{remark}
\section{Applications: Generating functions for a certain classes of the generalized Mathieu-type series and the extended Hurwitz--Lerch zeta functions} The main object of this section is to investigate several generating functions for a
certain class of  generalized Mathieu-type series and The extended Hurwitz--Lerch zeta function. Here, and in what follows, the generalized Mathieu-type series is defined by \cite{ZAZ}:
\begin{equation}\label{math}
S_\mu^{(\alpha,\beta)}(r;\textbf{a})=S_\mu^{(\alpha,\beta)}(r;\{a_k\}_{k=0}^\infty)=\sum_{k=1}^\infty\frac{2a_k^\beta}{(r^2+a_k^\alpha)^\mu},\;(r,\alpha,\beta,\mu>0),
\end{equation}
where it is tacitly assumed that the positive sequence  $$\textbf{a}=(a_k)_k,\;\textrm{such\;that}\;\lim_{k\longrightarrow\infty}a_k=\infty,$$
is so chosen  that the infinite series in the definition (\ref{math}) converges, that is, that the following auxiliary series:
$$\sum_{k=0}^\infty \frac{1}{a_k^{\mu\alpha-\beta}}$$
is convergent. 
\begin{theorem}Let $\alpha>0$ and $\mu>1.$ Then for $r>0$ and $x>0$ there holds the formula
\begin{equation}\label{1002}
\sum_{k=0}^\infty \Gamma(\mu+k) S_{\mu+k}^{(\alpha,k\alpha)}(r;\{n^{\frac{1}{\alpha}}\}_{n=1}^\infty)\frac{t^k}{k!}=\frac{2\Gamma(\mu)}{(1-t)^\mu}\zeta\left(\mu, 1+\frac{r^2}{1-t}\right),
\end{equation}
where $\zeta(s,a)$ is the Hurwitz Zeta Function defined by:
$$\zeta(s,a)=\sum_{n=0}^\infty\frac{1}{(n+a)^s},\;\Re(s)>1.$$
\end{theorem}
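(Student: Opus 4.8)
The plan is to unwind the definitions on the left-hand side of (\ref{1002}), interchange the two infinite summations, and recognise the resulting inner sum as an instance of the generalized binomial expansion (\ref{!!}). First I would note that for the sequence $a_n=n^{1/\alpha}$ one has $a_n^\alpha=n$ and $a_n^{k\alpha}=n^k$, so the definition (\ref{math}) of the generalized Mathieu-type series gives
\begin{equation*}
S_{\mu+k}^{(\alpha,k\alpha)}\bigl(r;\{n^{1/\alpha}\}_{n=1}^\infty\bigr)=2\sum_{n=1}^\infty\frac{n^k}{(r^2+n)^{\mu+k}}.
\end{equation*}
Inserting this into the left-hand side of (\ref{1002}) and pulling the factor $(r^2+n)^{-\mu}$ out of the $n$-th term, the left-hand side becomes the double series
\begin{equation*}
2\sum_{k=0}^\infty\sum_{n=1}^\infty\frac{\Gamma(\mu+k)}{k!}\,\frac{1}{(r^2+n)^\mu}\left(\frac{nt}{r^2+n}\right)^{k}.
\end{equation*}

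The next step is to justify interchanging the order of summation. For $n\ge1$ we have $0<\frac{n}{r^2+n}<1$, hence $\bigl|\frac{nt}{r^2+n}\bigr|<1$ when $|t|<1$; summing the $k$-series of absolute values yields $\Gamma(\mu)\bigl(1-\frac{n|t|}{r^2+n}\bigr)^{-\mu}(r^2+n)^{-\mu}=\Gamma(\mu)\bigl(r^2+n(1-|t|)\bigr)^{-\mu}$, and the sum of these over $n$ converges because $\mu>1$ (which is also precisely the hypothesis making the Mathieu series itself convergent). Thus Fubini's theorem applies and we may sum over $k$ first. Using $\frac{\Gamma(\mu+k)}{k!}=\Gamma(\mu)\binom{\mu+k-1}{k}$ together with (\ref{!!}), the inner sum equals $\Gamma(\mu)\bigl(1-\frac{nt}{r^2+n}\bigr)^{-\mu}=\Gamma(\mu)(r^2+n)^{\mu}\bigl(r^2+n(1-t)\bigr)^{-\mu}$.

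Substituting this back collapses the left-hand side to $2\Gamma(\mu)\sum_{n=1}^\infty\bigl(r^2+n(1-t)\bigr)^{-\mu}$; factoring $(1-t)^{-\mu}$ out rewrites it as $\frac{2\Gamma(\mu)}{(1-t)^\mu}\sum_{n=1}^\infty\bigl(n+\frac{r^2}{1-t}\bigr)^{-\mu}$, and shifting the index $n\mapsto n+1$ identifies the remaining series as $\zeta\bigl(\mu,1+\frac{r^2}{1-t}\bigr)$ by the definition of the Hurwitz zeta function given in the statement. This proves (\ref{1002}). The only genuinely delicate point is the Fubini step; all the remaining manipulations are routine, and the hypotheses $\mu>1$ and $|t|<1$ are exactly what make that step legitimate while keeping the argument $1+\frac{r^2}{1-t}$ of the zeta function in its region of validity.
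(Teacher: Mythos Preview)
Your argument is correct. It is, however, genuinely different from the paper's proof. The paper does not work directly with the series definition of $S_{\mu+k}^{(\alpha,k\alpha)}$; instead it invokes the integral representation
\[
S_{\mu}^{(\alpha,\beta)}\bigl(r;\{k^\nu\}_{k\ge1}\bigr)=\frac{2}{\Gamma(\mu)}\int_0^\infty \frac{x^{\nu(\mu\alpha-\beta)-1}}{e^{x}-1}\,{}_1\Psi_1\Bigl[{}^{(\mu,1)}_{(\nu(\mu\alpha-\beta),\nu\alpha)}\Big|-r^2x^{\nu\alpha}\Bigr]dx,
\]
so that the left-hand side of (\ref{1002}) becomes an integral against $x^{\mu-1}/(e^x-1)$ of the generating function $\sum_k {}_1\Psi_1\bigl[{}^{(\mu+k,1)}_{(\mu,1)}\big|-r^2x\bigr]t^k/k!$. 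The latter is evaluated by the special case (\ref{2.38}) of Theorem~\ref{T3}, collapsing the integrand to $(1-t)^{-\mu}e^{-r^2x/(1-t)}$, and the remaining integral is the classical Mellin-type representation of $\Gamma(\mu)\zeta(\mu,1+r^2/(1-t))$.

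Your route bypasses all of this machinery: you use only the defining series (\ref{math}), the binomial expansion (\ref{!!}), and Fubini. This is more elementary and self-contained, and your justification of the interchange of summations is cleaner than anything the paper makes explicit. What the paper's approach buys is thematic coherence: the theorem is presented as an \emph{application} of the Fox--Wright generating function (\ref{2.38}), so the integral-representation proof exhibits that dependence, whereas your proof shows the identity is really an elementary rearrangement independent of the Fox--Wright framework.
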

\begin{proof}We make use  the representation integral for the Mathieu's series \cite{KZ}, 
$$S_{\mu}^{(\alpha,\beta)}(r;\{k^\nu\}_{k=1}^\infty)=\frac{2}{\Gamma(\mu)}\int_0^\infty \frac{x^{\nu(\mu \alpha-\beta)-1}}{e^{x}-1}{}_1\Psi_1\left(^{(\mu,1)}_{(\nu(\mu \alpha-\beta),\nu\alpha)}\Big|-r^2x^{\nu\alpha}\right)dx,$$
combining with (\ref{2.38}), we have
\begin{equation}\label{102}
\begin{split}
\sum_{k=0}^\infty \Gamma(\mu+k) S_{\mu+k}^{(\alpha,k\alpha)}(r;\{n^{\frac{1}{\alpha}}\}_{n=1}^\infty)\frac{t^k}{2k!}&=\int_0^\infty\frac{x^{\mu-1}}{e^{x}-1}\left[\sum_{k=0}^\infty{}_1\Psi_1\left[^{(\mu+k,1)}_{(\mu,1)}\Big|-r^2 x\right]\frac{t^k}{k!}\right] dx\\
&=(1-t)^{-\lambda}\int_0^\infty\frac{x^{\mu-1}}{e^{x}-1}{}_{0}\Psi_{0}\Big[_{-}^{-}\Big|\frac{-r^2 x}{(1-t)} \Big]dx\\
&=(1-t)^{-\lambda}\int_0^\infty\frac{x^{\mu-1}}{e^{x}-1} e^{\frac{-r^2 x}{(1-t)}} dx.
\end{split}
\end{equation}
We now make use of the following known formula \cite[Eq. (10), p. 144]{7}
$$\int_0^\infty \frac{x^{\nu-1}e^{-px}}{(1-e^{-\frac{x}{\alpha}})}dx=\alpha^\nu\Gamma(\nu)\zeta(\nu, \alpha p), (\nu>1, p>0),$$
Inserting the above result with the help of (\ref{102}), the results (\ref{1002}) readily follows.
\end{proof}
\begin{corollary}\label{C20}Let $\alpha>0$ and $\mu>1.$ Then the following formula
\begin{equation}\label{10020}
\sum_{k=0}^\infty S_{\mu+k}^{(\alpha,k\alpha)}(\sqrt{1-t};\{n^{\frac{1}{\alpha}}\}_{n=1}^\infty)\frac{\Gamma(\mu+k) t^k}{k!}=\frac{2\Gamma(\mu)}{(1-t)^\mu}\zeta\left(\mu,2\right),
\end{equation}
holds true for all $|t|<1.$ Moreover, the following double series identity holds true:
\begin{equation}\label{100200}
\sum_{k=0}^\infty\sum_{n=1}^\infty\frac{(k+1)}{(1+mn)^2}\left(\frac{(m-1)n}{(1+mn)}\right)^k=\frac{\pi^2-6}{6},\;m\geq2.
\end{equation}
\end{corollary}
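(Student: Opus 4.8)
The plan is to derive \eqref{10020} directly from Theorem stated in \eqref{1002} by the substitution $r=\sqrt{1-t}$, and then to extract \eqref{100200} as the special case $\mu=2$. First I would set $r^2=1-t$ in the identity \eqref{1002}; then the argument of the Hurwitz zeta function becomes $1+\frac{r^2}{1-t}=1+\frac{1-t}{1-t}=2$, so the right-hand side collapses to $\frac{2\Gamma(\mu)}{(1-t)^\mu}\zeta(\mu,2)$, which is exactly \eqref{10020}. The only point to check here is that $r=\sqrt{1-t}>0$ is admissible, which holds for all $|t|<1$ (indeed $0<t<1$ is the range of interest), so Theorem's hypotheses $r>0$, $\alpha>0$, $\mu>1$ remain in force.

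Next I would specialize \eqref{10020} to $\mu=2$ and $\alpha=2$ (so that $n^{1/\alpha}=\sqrt{n}$, whence $a_n^\alpha=n$ and $a_n^{k\alpha}=n^{k}$). Writing out the generalized Mathieu series \eqref{math} with parameters $\mu+k$, $\alpha=2$, $\beta=k\alpha=2k$, and $r^2=1-t$ gives
\begin{equation}
S_{\mu+k}^{(\alpha,k\alpha)}(\sqrt{1-t};\{n^{1/2}\}_{n=1}^\infty)=\sum_{n=1}^\infty\frac{2\,n^{k}}{\bigl((1-t)+n\bigr)^{\mu+k}}.
\end{equation}
With $\mu=2$ the left side of \eqref{10020} becomes $\sum_{k=0}^\infty\frac{\Gamma(2+k)}{k!}\,t^k\sum_{n=1}^\infty\frac{2n^k}{(1-t+n)^{2+k}}=2\sum_{k=0}^\infty(k+1)\,t^k\sum_{n=1}^\infty\frac{n^k}{(1-t+n)^{2+k}}$, using $\Gamma(2+k)/k!=(k+1)$. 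Factoring $(1-t+n)^{-(2+k)}=(1-t+n)^{-2}\,(1-t+n)^{-k}$ and combining the $k$-dependent factors into $\bigl(\tfrac{tn}{1-t+n}\bigr)^k$, the double sum rewrites as $2\sum_{k=0}^\infty\sum_{n=1}^\infty\frac{(k+1)}{(1-t+n)^2}\bigl(\tfrac{tn}{1-t+n}\bigr)^k$. Meanwhile the right side of \eqref{10020} at $\mu=2$ equals $\frac{2\Gamma(2)}{(1-t)^2}\zeta(2,2)=\frac{2}{(1-t)^2}\bigl(\tfrac{\pi^2}{6}-1\bigr)=\frac{\pi^2-6}{3(1-t)^2}$.

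The final step is the change of variable relating $t$ to $m$. Setting $m=\frac1{1-t}$, equivalently $t=1-\frac1m$, turns $1-t$ into $\frac1m$, so $1-t+n=\frac{1+mn}{m}$, $\frac{tn}{1-t+n}=\frac{(1-\frac1m)n\cdot m}{1+mn}=\frac{(m-1)n}{1+mn}$, and $(1-t+n)^{-2}=\frac{m^2}{(1+mn)^2}$; also $\frac1{(1-t)^2}=m^2$. Substituting everything, the factor $m^2$ cancels on both sides after dividing by $2$, leaving precisely $\sum_{k=0}^\infty\sum_{n=1}^\infty\frac{(k+1)}{(1+mn)^2}\bigl(\tfrac{(m-1)n}{1+mn}\bigr)^k=\frac{\pi^2-6}{6}$, which is \eqref{100200}; the condition $|t|<1$ with $t=1-\frac1m$ becomes $m>\tfrac12$, so in particular $m\geq2$ is allowed. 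The main obstacle is bookkeeping rather than conceptual: one must verify that the interchange of the double summation is legitimate (absolute convergence of $\sum_{k,n}(k+1)\bigl(\tfrac{(m-1)n}{1+mn}\bigr)^k(1+mn)^{-2}$ for $m\geq2$, which follows since $\tfrac{(m-1)n}{1+mn}<1$ uniformly and the geometric-type $k$-sum $\sum_k(k+1)x^k=(1-x)^{-2}$ converges, leaving an $n$-sum dominated by $\sum_n n^0\cdot(1+mn)^{-2}\cdot$const $<\infty$), and that the parameter choice $\alpha=2$ indeed satisfies the hypothesis $\alpha>0$, $\mu=2>1$ of \eqref{10020}.
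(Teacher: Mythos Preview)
Your proof is correct and follows essentially the same route as the paper: specialize $r=\sqrt{1-t}$ in \eqref{1002} to obtain \eqref{10020}, then set $\mu=2$ and $t=1-\tfrac{1}{m}$, expand the Mathieu series explicitly from \eqref{math}, and use $\zeta(2,2)=\zeta(2)-1=\tfrac{\pi^2-6}{6}$. The only superfluous step is fixing $\alpha=2$: since $a_n=n^{1/\alpha}$ gives $a_n^{\alpha}=n$ and $a_n^{k\alpha}=n^{k}$ for \emph{every} $\alpha>0$, the expanded series $\sum_{n\geq1}2n^{k}(1-t+n)^{-(\mu+k)}$ is already independent of $\alpha$, so no particular value needs to be chosen.
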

\begin{proof} Letting $r=\sqrt{1-t}$ in (\ref{1002}) we easily get the formula (\ref{10020}). Next, setting $t=1-\frac{1}{m},\;m\geq2$ and $\mu=2$ in (\ref{10020}) we find
\begin{equation}\label{ya rab}
\sum_{k=0}^\infty \frac{(k+1)}{2m^2}S_{2+k}^{(\alpha,k\alpha)}(m^{-\frac{1}{2}};\{n^{\frac{1}{\alpha}}\}_{n=1}^\infty)\left(\frac{m-1}{m}\right)^k=\zeta\left(2,2\right)=\zeta(2)-1,
\end{equation}
where $\zeta(s)$ is the Riemann zeta function defined by $$\zeta(s)=\sum_{k=1}^\infty\frac{1}{k^s},\;s>1.$$
Now, combining (\ref{ya rab}) with the definition of the generalized Mathieu series (\ref{math}), and using the fact that $\zeta(2)=\frac{\pi^2}{6},$  we obtain the desired formula (\ref{100200}) and consequently the proof of Corollary \ref{C20} is complete.
\end{proof}
\begin{remark}Setting in the formula (\ref{100200}), $m=2, m=3$ and $m=4$ respectively, we get the following double series identities
\begin{equation}
\sum_{k=0}^\infty\sum_{n=1}^\infty\frac{(k+1)}{(1+2n)^2}\left(\frac{n}{1+2n}\right)^k=\frac{\pi^2-6}{6},
\end{equation}
\begin{equation}
\sum_{k=0}^\infty\sum_{n=1}^\infty\frac{(k+1)}{(1+3n)^2}\left(\frac{2n}{1+3n}\right)^k=\frac{\pi^2-6}{6},
\end{equation}
\begin{equation}
\sum_{k=0}^\infty\sum_{n=1}^\infty\frac{(k+1)}{(1+4n)^2}\left(\frac{3n}{1+4n}\right)^k=\frac{\pi^2-6}{6}.
\end{equation}
\end{remark}
\begin{remark}If we set $\mu=3$ (respectively $m=4$) and $t=1-\frac{1}{m}, \;m\geq2$ in the formula (\ref{10020}) we obtain the following formulas
\begin{equation}
\sum_{k=0}^\infty\sum_{n=1}^\infty\frac{(k+1)(k+2)}{(1+mn)^3}\left(\frac{(m-1)n}{(1+mn)}\right)^k=\zeta(3,2)=\zeta(3)-1\approx 0,202056903.
\end{equation}
and
\begin{equation}
\sum_{k=0}^\infty\sum_{n=1}^\infty\frac{(k+1)(k+2)(k+3)}{(1+mn)^4}\left(\frac{(m-1)n}{(1+mn)}\right)^k=\zeta(4,2)=\zeta(4)-1=\frac{\pi^4-90}{90}.
\end{equation}
\end{remark}

The extended Hurwitz-Lerch zeta function
\begin{equation}
\begin{split}
\Phi^{(\lambda_j,\rho_j;p)}_{(\mu_j,\sigma_j;q)}(z,s,a)&=\Phi^{(\rho_1,...,\rho_p;\sigma_1,...,\sigma_q)}_{\lambda_1,...,\lambda_p;\mu_1,...,\mu_q}(z,s,a)\\
&=\left(\frac{\prod_{j=1}^q\Gamma(\mu_j)}{\prod_{j=1}^p\Gamma(\lambda_j)}\right)\sum_{k=0}^\infty\frac{\prod_{j=1}^p\Gamma(\lambda_j+k\rho_j)}{\prod_{j=1}^q\Gamma(\mu_j+k\sigma_j)}\frac{z^k}{k!(k+a)^s}
\end{split}
\end{equation}
\begin{equation*}
\begin{split}
\Big(p, q&\in\mathbb{N}_0;\lambda_j\in\mathbb{C}\;(j=1,...,p); a,\mu_j\in\mathbb{C} \setminus\mathbb{Z}_0^-\; ( j = 1 , . . . , q);\\
\rho_j&,\sigma_k\in\mathbb{R}^+ ( j = 1 , . . . , p ; k = 1 , . . . , q );\\
\Delta_1&>-1\;\;\textrm{when}\;\; s , z\in\mathbb{C};\\
\Delta_1&=-1\;\;\textrm{and}\;\; s\in\mathbb{C}\;\textrm{when}\;|z|<\nabla^*;\\
\Delta_1&=-1\;\;\textrm{and}\;\; \Re(\Xi)>\frac{1}{2}\;\textrm{when}\;|z|<\nabla^*\Big).
\end{split}
\end{equation*}
where
$$\nabla^*=\left(\prod_{j=1}^p\rho_j^{-\rho_j}\right).\left(\prod_{j=1}^q\sigma_j^{\sigma_j}\right)$$
and
$$\Delta_1=\sum_{j=1}^q\sigma_j-\sum_{j=1}^p\rho_j,\;\;\textrm{and}\;\;\Xi=s+\sum_{j=1}^q\mu_j-\sum_{j=1}^p\lambda_j+\frac{p-q}{2}.$$
Moreover, the extended Hurwitz-Lerch zeta function possesses the following integral representation
\begin{equation}\label{LURCH}
\Phi^{(\lambda_j,\rho_j;p)}_{(\mu_j,\sigma_j;q)}(z,s,a)=\left(\frac{\prod_{j=1}^q\Gamma(\mu_j)}{\Gamma(s)\prod_{j=1}^p\Gamma(\lambda_j)}\right)\int_0^\infty \xi^{s-1} e^{-a\xi}{}_p\Psi_q\Big[_{(\mu_1,\sigma_1),...,(\mu_q,\sigma_q)}^{(\lambda_1,\rho_1),...,(\lambda_p,\rho_p)}\Big|z e^{-\xi} \Big]d\xi
\end{equation}
$$\left(\;\min(\Re(a), \Re(s))>0\;\right).$$
\begin{theorem}\label{T789} The following generating function
\begin{equation}\label{YARABBI}
\sum_{k=0}^\infty \Phi^{(\rho_1,...,\rho_p;\rho_1,\sigma_2,...,\sigma_q)}_{\lambda_1+k,\lambda_2,...,\lambda_p;\lambda_1,\mu_2,...,\mu_q}(z,s,a)\frac{\Gamma(\lambda_1+k) t^k}{k!}=\frac{\Gamma(\lambda_1)}{(1-t)^{\lambda_1}}\Phi^{(\rho_2,...,\rho_p;\sigma_2,...,\sigma_q)}_{\lambda_2,...,\lambda_p;\mu_2,...,\mu_q}(z(1-t)^{-\rho_1},s,a)
\end{equation}
holds true for all $|t|<1.$
\end{theorem}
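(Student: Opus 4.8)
The plan is to reduce (\ref{YARABBI}) to the integral representation (\ref{LURCH}) of the extended Hurwitz--Lerch zeta function combined with the generating function of Theorem \ref{T3}, in exactly the spirit of the preceding Mathieu-series computations. Write $\mathcal{T}$ for the left-hand side of (\ref{YARABBI}). First I would substitute (\ref{LURCH}) into the $k$-th term $\Phi^{(\rho_1,\dots,\rho_p;\rho_1,\sigma_2,\dots,\sigma_q)}_{\lambda_1+k,\lambda_2,\dots,\lambda_p;\lambda_1,\mu_2,\dots,\mu_q}(z,s,a)$, noting that here the first lower parameter is $\mu_1=\lambda_1$ with $\sigma_1=\rho_1$, and the first upper parameter is $\lambda_1+k$ with coefficient $\rho_1$; hence the Gamma prefactor of (\ref{LURCH}) equals $\Gamma(\lambda_1)\prod_{j=2}^{q}\Gamma(\mu_j)\big/\big(\Gamma(s)\,\Gamma(\lambda_1+k)\prod_{j=2}^{p}\Gamma(\lambda_j)\big)$. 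The $\Gamma(\lambda_1+k)$ in the denominator cancels the $\Gamma(\lambda_1+k)$ in the summation weight $\Gamma(\lambda_1+k)t^k/k!$, leaving the $k$-independent constant $C:=\Gamma(\lambda_1)\prod_{j=2}^{q}\Gamma(\mu_j)\big/\big(\Gamma(s)\prod_{j=2}^{p}\Gamma(\lambda_j)\big)$. After interchanging summation and integration this yields
\[
\mathcal{T}=C\int_0^\infty \xi^{s-1}e^{-a\xi}\Bigg(\sum_{k=0}^{\infty}{}_p\Psi_q\Big[_{(\lambda_1,\rho_1),(\mu_2,\sigma_2),\dots,(\mu_q,\sigma_q)}^{(\lambda_1+k,\rho_1),(\lambda_2,\rho_2),\dots,(\lambda_p,\rho_p)}\Big|ze^{-\xi}\Big]\frac{t^k}{k!}\Bigg)\,d\xi .
\]

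Next I would evaluate the bracketed inner series by Theorem \ref{T3}, with the identifications $\lambda\mapsto\lambda_1$, $A\mapsto\rho_1$, argument $z\mapsto ze^{-\xi}$, upper parameters $({\bf a}_{p-1},{\bf A}_{p-1})\mapsto(\lambda_2,\rho_2),\dots,(\lambda_p,\rho_p)$ and lower parameters $({\bf b}_{q-1},{\bf B}_{q-1})\mapsto(\mu_2,\sigma_2),\dots,(\mu_q,\sigma_q)$; this collapses the sum to $(1-t)^{-\lambda_1}\,{}_{p-1}\Psi_{q-1}\big[_{(\mu_2,\sigma_2),\dots,(\mu_q,\sigma_q)}^{(\lambda_2,\rho_2),\dots,(\lambda_p,\rho_p)}\big|\,ze^{-\xi}(1-t)^{-\rho_1}\big]$. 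Consequently $\mathcal{T}$ becomes $C\,(1-t)^{-\lambda_1}$ times the integral of $\xi^{s-1}e^{-a\xi}$ against this ${}_{p-1}\Psi_{q-1}$ evaluated at $\big(z(1-t)^{-\rho_1}\big)e^{-\xi}$. Finally I would recognise that last integral, after multiplication by $C$, as $\Gamma(\lambda_1)$ times the integral representation (\ref{LURCH}) of $\Phi^{(\rho_2,\dots,\rho_p;\sigma_2,\dots,\sigma_q)}_{\lambda_2,\dots,\lambda_p;\mu_2,\dots,\mu_q}\big(z(1-t)^{-\rho_1},s,a\big)$, because the Gamma prefactor of (\ref{LURCH}) attached to that reduced function is exactly $\prod_{j=2}^{q}\Gamma(\mu_j)\big/\big(\Gamma(s)\prod_{j=2}^{p}\Gamma(\lambda_j)\big)=C/\Gamma(\lambda_1)$. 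This produces (\ref{YARABBI}).

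The step I expect to be the main obstacle is the rigorous justification of interchanging $\sum_k$ with $\int_0^\infty$, and of keeping $ze^{-\xi}(1-t)^{-\rho_1}$ inside the region of convergence of ${}_{p-1}\Psi_{q-1}$ uniformly in $\xi\ge0$; both should follow from $|t|<1$, $\min(\Re(a),\Re(s))>0$, and the convergence conditions already assumed for the defining series, which make each integrand dominated by $\xi^{s-1}e^{-a\xi}$ times a factor geometric in $t$. A shortcut that avoids the analysis is to insert the series definition of $\Phi$ directly into $\mathcal{T}$, cancel $\Gamma(\lambda_1+k)$, exchange the $k$- and $n$-summations, and apply the binomial series (\ref{!!}) in the form $\sum_{k\ge0}\Gamma(\lambda_1+n\rho_1+k)\,t^k/k!=\Gamma(\lambda_1+n\rho_1)(1-t)^{-\lambda_1-n\rho_1}$; re-assembling the surviving $n$-series then gives $\frac{\Gamma(\lambda_1)}{(1-t)^{\lambda_1}}\,\Phi^{(\rho_2,\dots,\rho_p;\sigma_2,\dots,\sigma_q)}_{\lambda_2,\dots,\lambda_p;\mu_2,\dots,\mu_q}\big(z(1-t)^{-\rho_1},s,a\big)$ directly. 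Either route establishes the identity for $|t|<1$.
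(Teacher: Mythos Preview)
Your proposal is correct and follows essentially the same route as the paper: substitute the integral representation (\ref{LURCH}), interchange sum and integral, apply Theorem \ref{T3} to collapse the inner $k$-sum, and identify the remaining integral as (\ref{LURCH}) for the reduced $\Phi$. Your bookkeeping of the Gamma prefactors is in fact cleaner than the paper's (which has a minor typo in the product bounds), and the alternative ``shortcut'' you sketch---cancelling $\Gamma(\lambda_1+k)$ in the series definition and summing via (\ref{!!})---is a genuinely simpler argument that bypasses the integral representation and its convergence restrictions $\min(\Re(a),\Re(s))>0$ entirely.
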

\begin{proof}In virtue of (\ref{LURCH}) and (\ref{100}) we get
\begin{equation*}
\begin{split}
\sum_{k=0}^\infty \Phi^{(\rho_1,...,\rho_p;\rho_1,\sigma_2,...,\sigma_q)}_{\lambda_1+k,\lambda_2,...,\lambda_p;\lambda_1,\mu_2,...,\mu_q}(z,s,a)&\frac{\Gamma(\lambda_1+k) t^k}{k!}=\frac{\Gamma(\lambda_1)\prod_{j=2}^p\Gamma(\mu_j)}{\Gamma(s)\prod_{j=2}^p\Gamma(\lambda_j)}\\
&\times\int_0^\infty \xi^{s-1} e^{-a\xi}\left[\sum_{k=0}^\infty {}_p\Psi_q\Big[_{(\lambda_1,\rho_1),(\mu_2,\sigma_2),...,(\mu_q,\sigma_q)}^{(\lambda_1+k,\rho_1),(\lambda_2,\rho_2),...,(\lambda_p,\rho_p)}\Big|z e^{-\xi} \Big]\frac{t^k}{k!}\right]d\xi\\
&=\frac{\Gamma(\lambda_1)\prod_{j=2}^p\Gamma(\mu_j)}{\Gamma(s)(1-t)^{\lambda_1}\prod_{j=2}^p\Gamma(\lambda_j)}\\
&\times\int_0^\infty \xi^{s-1} e^{-a\xi}{}_{p-1}\Psi_{q-1}\Big[_{(\mu_2,\sigma_2),...,(\mu_q,\sigma_q)}^{(\lambda_2,\rho_2),...,(\lambda_p,\rho_p)}\Big|\frac{z}{(1-t)^{\rho_1}e^{\xi}} \Big]d\xi.
\end{split}
\end{equation*}
Combining this with (\ref{LURCH}) yields to the desired assertion (\ref{YARABBI}) of Theorem \ref{T789}.
\end{proof}
 
The Hurwitz-Lerch zeta function $\Phi(z,s,a)$ is defined by
\begin{equation}
\Phi(z,s,a)=\sum_{n=0}^\infty\frac{z^n}{(n+a)^s}
\end{equation}
$$\left(\;H_3: a\in\mathbb{C}\setminus\mathbb{Z}_0^-; s\in\mathbb{C}\;\textrm{when}\;|z|<1; \Re(s)>1\;\textrm{when}\;|z|=1\right).$$

The Hurwitz-Lerch zeta function itself reduces not only to  the Riemann zeta function $\zeta(s),$  the Hurwitz zeta function  $\zeta(s,a),$  but also to such other important  functions of Analytic Number Theory as as the Polylogarithm function (or de Jonqui\`ere's function) $\Li_s(z)$, the Lipschitz-Lerch zeta function $L(\xi, a,s)$ and the Lerch zeta function $l_s(\xi)$ defined by \cite[Chapter 1, p. 27-31]{E1}
\begin{equation}
\Li_s(z)=\sum_{n=1}^\infty\frac{z^n}{n^s},\;\;\left(\Re(s)>0; z\in\mathbb{C}\;\;\textrm{when}\;|z|<1\right)
\end{equation}
\begin{equation}
L(\xi, a,s)=\sum_{n=0}^\infty\frac{e^{2i n\pi\xi}}{(n+a)^s},\;\;\left(\Re(s)>1; \xi\in\mathbb{R}; 0<a\leq1\right).
\end{equation}
and
\begin{equation}
l_s(\xi)=\sum_{n=0}^\infty\frac{e^{2i n\pi\xi}}{(n+1)^s},\;\;\left(\Re(s)>1; \xi\in\mathbb{R}\right).
\end{equation}
\begin{corollary}\label{CV78}
The following generating function
\begin{equation}\label{YARABBIlotfek}
\sum_{k=0}^\infty \Phi^{(\rho_1,1;\rho_1)}_{\lambda_1+k,1;\lambda_1}(z,s,a)\frac{\Gamma(\lambda_1+k) t^k}{k!}=\frac{\Gamma(\lambda_1)}{(1-t)^{\lambda_1}}\Phi(z(1-t)^{-\rho_1},s,a)
\end{equation}
holds true for all $-1<t<0.$ Furthermore, the following generating function involving the Lipschitz-Lerch zeta function $L(\xi, a,s)$
\begin{equation}\label{7biba}
\sum_{k=0}^\infty \Phi^{(\rho_1,1;\rho_1)}_{\lambda_1+k,1;\lambda_1}(e^{2i\pi\xi},s,a)\frac{\Gamma(\lambda_1+k) t^k}{k!}=\frac{\Gamma(\lambda_1)}{(1-t)^{\lambda_1}}L(\xi, a,s),
\end{equation}
holds true for all $-1<t<0, 0<a\leq1, \Re(s)>1$ and $\xi\in\mathbb{R}.$
\end{corollary}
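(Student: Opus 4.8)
The plan is to deduce both generating functions as specializations of Theorem~\ref{T789}. First I would set $p=2$ and $q=1$ in identity (\ref{YARABBI}) and take the second numerator pair to be $(\lambda_2,\rho_2)=(1,1)$, keeping $(\lambda_1+k,\rho_1)$ as the first numerator pair and $(\lambda_1,\rho_1)$ as the single denominator pair. With these choices the summand on the left of (\ref{YARABBI}) becomes exactly $\Phi^{(\rho_1,1;\rho_1)}_{\lambda_1+k,1;\lambda_1}(z,s,a)$, while the reduced extended Hurwitz--Lerch zeta function appearing on the right, that is, the one carrying only the single remaining numerator pair $(1,1)$ and no denominator parameter, becomes the series $\frac{1}{\Gamma(1)}\sum_{n\ge 0}\Gamma(n+1)\,\frac{\bigl(z(1-t)^{-\rho_1}\bigr)^{n}}{n!\,(n+a)^{s}}$.

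The one observation to record is that, since $\Gamma(n+1)=n!$, this last series is precisely the classical Hurwitz--Lerch zeta function $\Phi\bigl(z(1-t)^{-\rho_1},s,a\bigr)$; feeding this back into the specialized (\ref{YARABBI}) produces (\ref{YARABBIlotfek}). I would then verify the admissible range of $t$: for the chosen parameters one has $\Delta_{1}=\rho_{1}-(\rho_{1}+1)=-1$ and $\nabla^{*}=\rho_{1}^{-\rho_{1}}\cdot 1^{-1}\cdot\rho_{1}^{\rho_{1}}=1$, so the series on the left represents $\Phi^{(\rho_1,1;\rho_1)}_{\lambda_1+k,1;\lambda_1}$ on the disc $|z|<1$, and for $-1<t<0$ one has $0<(1-t)^{-\rho_{1}}<1$, which keeps $z(1-t)^{-\rho_{1}}$ strictly inside the unit disc and thus secures the convergence of the $\Phi$ on the right-hand side. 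This is exactly what pins the hypothesis down to $-1<t<0$.

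For the second identity I would specialize (\ref{YARABBIlotfek}) at $z=e^{2i\pi\xi}$ and then read off the Lipschitz--Lerch zeta function from its defining series $L(\xi,a,s)=\sum_{n\ge 0}e^{2in\pi\xi}(n+a)^{-s}$, which yields (\ref{7biba}); here the additional hypotheses $0<a\le1$ and $\Re(s)>1$ are what make working on the boundary $|z|=1$ legitimate.

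The whole argument is short, and no single step is a genuine obstacle; the care needed lies in the bookkeeping of the parameter vectors when passing from the general Theorem~\ref{T789} to the two-numerator, one-denominator configuration, in recognising that the reduced extended Hurwitz--Lerch zeta function collapses to the classical $\Phi$, and in tracking the convergence data $\Delta_{1}=-1$, $\nabla^{*}=1$ that delimits the admissible range of $t$.
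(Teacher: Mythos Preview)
Your derivation of (\ref{YARABBIlotfek}) is essentially identical to the paper's: set $p=2$, $q=1$, $(\lambda_2,\rho_2)=(1,1)$ in Theorem~\ref{T789} and observe that the surviving one-parameter object $\Phi^{(1;-)}_{1;-}$ collapses, via $\Gamma(n+1)=n!$, to the classical Hurwitz--Lerch zeta function $\Phi$. Your explicit check of the convergence data $\Delta_1=-1$, $\nabla^*=1$ and the resulting justification of the range $-1<t<0$ is a welcome addition that the paper leaves implicit.

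There is, however, a genuine gap in your passage to (\ref{7biba}), and the paper's one-line argument has the same defect. Substituting $z=e^{2i\pi\xi}$ into (\ref{YARABBIlotfek}) produces on the right-hand side
\[
\frac{\Gamma(\lambda_1)}{(1-t)^{\lambda_1}}\,\Phi\!\bigl(e^{2i\pi\xi}(1-t)^{-\rho_1},\,s,\,a\bigr),
\]
and this is \emph{not} $\frac{\Gamma(\lambda_1)}{(1-t)^{\lambda_1}}L(\xi,a,s)$. The factor $(1-t)^{-\rho_1}$ does not cancel: for $-1<t<0$ it is a real number strictly between $0$ and $1$, so the argument of $\Phi$ lands strictly inside the unit disc, whereas $L(\xi,a,s)=\Phi(e^{2i\pi\xi},s,a)$ is the value on the unit circle. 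Hence (\ref{7biba}) as stated does not follow from (\ref{YARABBIlotfek}) by the bare substitution $z=e^{2i\pi\xi}$; one must either replace the right-hand side of (\ref{7biba}) by $\frac{\Gamma(\lambda_1)}{(1-t)^{\lambda_1}}\Phi\bigl(e^{2i\pi\xi}(1-t)^{-\rho_1},s,a\bigr)$, or else supply a separate limiting argument (e.g.\ $t\to 0^-$, which however kills the $t$-dependence entirely). Your remark that the hypotheses $0<a\le 1$, $\Re(s)>1$ legitimize ``working on the boundary $|z|=1$'' does not address this, because after the substitution the argument is no longer on the boundary.
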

\begin{proof}Letting $p=2, q=1$ and $\lambda_2=1$ in (\ref{YARABBI}) we obtain
\begin{equation}
\begin{split}
\sum_{k=0}^\infty \Phi^{(\rho_1,1;\rho_1)}_{\lambda_1+k,1;\lambda_1}(z,s,a)\frac{\Gamma(\lambda_1+k) t^k}{k!}&=\frac{\Gamma(\lambda_1)}{(1-t)^{\lambda_1}}\Phi^{(1;-)}_{1;-}(z(1-t)^{-\rho_1},s,a)\\
&=\frac{\Gamma(\lambda_1)}{(1-t)^{\lambda_1}}\sum_{n=0}^\infty\frac{(z(1-t)^{-\rho_1})^n}{(n+a)^s}\\
&=\frac{\Gamma(\lambda_1)}{(1-t)^{\lambda_1}}\Phi\left(z(1-t)^{-\rho_1},s,a\right),
\end{split}
\end{equation}
and consequently the formula (\ref{YARABBIlotfek}) holds true. Finally, setting $z=e^{2i\pi\xi}$ in (\ref{YARABBIlotfek}) we find (\ref{7biba}), which completes the proof of Corollary \ref{CV78}.
\end{proof}
\begin{corollary} Assume that the Hypotheses $(H_3)$ are satisfied. Then the following  double series identities
\begin{equation}\label{4560}
\sum_{k=0}^\infty\sum_{n=0}^\infty\frac{\Gamma(n+k+1)}{n!}\frac{(1-t)^{n+1}z^n t^k}{(n+a)^s}=\Phi\left(z,s,a\right),
\end{equation}
holds true for all $-1<t<0$ In particular, the following  double series identities
\begin{equation}\label{4561}
\sum_{k=0}^\infty\sum_{n=0}^\infty\frac{\Gamma(n+k+1)}{n!}\frac{(1-t)^{n+1} e^{2in\pi\xi}t^k}{(n+a)^s}=L(\xi, a,s),
\end{equation}
holds true for all $-1<t<0, \xi\in\mathbb{R}, \Re(s)>1$ and $0<a\leq1.$
\end{corollary}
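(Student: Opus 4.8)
The plan is to read both identities off the generating function (\ref{YARABBIlotfek}) of Corollary \ref{CV78}, by specializing its free parameters and unfolding the extended Hurwitz--Lerch zeta function on its left-hand side into an explicit power series; nothing beyond the generalized binomial expansion (\ref{!!}), already used repeatedly above, is needed.

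First I would set $\rho_1 = 1$ and $\lambda_1 = 1$ in (\ref{YARABBIlotfek}). The right-hand side then reduces to $(1-t)^{-1}\Phi\bigl(z(1-t)^{-1},s,a\bigr)$, while on the left the weight $\Gamma(\lambda_1+k)\,t^k/k! = \Gamma(k+1)\,t^k/k!$ becomes simply $t^k$. A short computation with the series definition of the extended Hurwitz--Lerch zeta function shows moreover that $\Phi^{(1,1;1)}_{1+k,1;1}(z,s,a) = \tfrac{1}{k!}\sum_{n\ge 0}\tfrac{\Gamma(n+k+1)}{n!}\tfrac{z^n}{(n+a)^s}$ (the factor $\Gamma(1+n)$ in the numerator cancelling the one in the denominator of the summand, and the leading constant being $1/\Gamma(1+k) = 1/k!$). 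Inserting these simplifications into (\ref{YARABBIlotfek}) turns it into a double-series identity in the variables $z$ and $t$.

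Next I would make the change of variable $z \mapsto z(1-t)$ and multiply both sides by $1-t$. Since $z(1-t)(1-t)^{-1} = z$, the right-hand side collapses to $\Phi(z,s,a) = \sum_{n\ge 0} z^n/(n+a)^s$, and grouping $(1-t)\cdot(1-t)^n = (1-t)^{n+1}$ on the left yields (\ref{4560}). The second identity (\ref{4561}) then follows by the single substitution $z = e^{2i\pi\xi}$: by the definition of the Lipschitz--Lerch zeta function one has $\Phi(e^{2i\pi\xi},s,a) = \sum_{n\ge 0} e^{2in\pi\xi}/(n+a)^s = L(\xi,a,s)$, and the extra hypotheses $0 < a \le 1$, $\Re(s) > 1$, $\xi \in \mathbb{R}$ are exactly those that make $(H_3)$ hold for $z = e^{2i\pi\xi}$ on the unit circle, hence keep (\ref{YARABBIlotfek}) in force.

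The point that truly requires care --- and the main obstacle I anticipate --- is the legitimacy of these manipulations. For $-1 < t < 0$ the double series in (\ref{4560})--(\ref{4561}) is only conditionally convergent, so the identity must be understood with the inner summation over $k$ performed first; it is precisely the binomial sum $\sum_{k\ge 0}\binom{n+k}{n}t^k = (1-t)^{-(n+1)}$ from (\ref{!!}) that cancels the factor $(1-t)^{n+1}$ and reduces the remaining $n$-series to $\Phi(z,s,a)$. I would therefore keep the identity in the form (\ref{YARABBIlotfek}), where the outer $k$-series converges for $|t| < 1$, carry out the substitution $z \mapsto z(1-t)$ within the common region of validity of (\ref{LURCH}), (\ref{YARABBI}) and (\ref{YARABBIlotfek}) (in particular keeping the argument $z(1-t)$ admissible), and only afterwards expand each term into its $n$-series, an Abel-type limiting argument handling boundary cases such as $|z| = 1$. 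Sorting out this order-of-summation and domain bookkeeping is where the work lies; the underlying algebra is routine.
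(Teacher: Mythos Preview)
Your proposal is correct and follows essentially the same route as the paper's own proof: specialize $\lambda_1=\rho_1=1$ in (\ref{YARABBIlotfek}), replace $z$ by $(1-t)z$, and then substitute $z=e^{2i\pi\xi}$ for the second identity. The only difference is that you supply the explicit series computation and a careful discussion of the order of summation and convergence, points the paper dismisses as ``straightforward calculation.''
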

\begin{proof}Upon setting $\lambda_1=\rho_1=1$ and replace $z$ by $(1-t)z$ in (\ref{YARABBIlotfek}) and straightforward calculation would yield to the formula (\ref{4560}). Now, letting $z=e^{2i\pi\xi}$ in (\ref{4560}) we obtain (\ref{4561}).
\end{proof}
\begin{remark} Using the fact that $\Li_s(z)=z\Phi\left(z,s,1\right)$ we get
\begin{equation}\label{4560}
\sum_{k=0}^\infty\sum_{n=0}^\infty\frac{\Gamma(n+k+1)}{n!}\frac{(1-t)^{n+1}z^n t^k}{(n+1)^s}=\frac{\Li_s(z)}{z},
\end{equation}
holds true for all $-1<t<0, \Re(s)>0$ and $z\in\mathbb{C}$ when $|z|<1.$ In particular, by using  some particular expressions of the Polylogarithm function we obtain for $-1<t<0:$
\begin{eqnarray}
\sum_{k=0}^\infty\sum_{n=0}^\infty\frac{\Gamma(n+k+1)(1-t)^{n+1}z^n t^k}{(n+1)!}&=&\frac{-\log(1-z)}{z}, 0<z<1\\
\sum_{k=0}^\infty\sum_{n=0}^\infty\frac{\Gamma(n+k+1)(1-t)^{n+1}t^k}{(n+1)! 2^n}&=&2\log(2)\\
\sum_{k=0}^\infty\sum_{n=0}^\infty\frac{\Gamma(n+k+1)(1-t)^{n+1}t^k}{(n+1)! (n+1) 2^n}&=&\frac{\pi^2}{6}-\log^2(2)\\
\sum_{k=0}^\infty\sum_{n=0}^\infty\frac{\Gamma(n+k+1)(1-t)^{n+1}t^k}{(n+1)! (n+1)^2 2^n}&=&\frac{\log^3(2)}{3}-\frac{\pi^2}{6}\log(2)-\frac{\pi^2}{6}\log(2)+\frac{7}{8}\zeta(3).
\end{eqnarray}
\end{remark}

\end{document}